\numberwithin{equation}{section}
\numberwithin{figure}{section}
\theoremstyle{plain}
\newtheorem{thm}{\protect\theoremname}
\theoremstyle{remark}
\newtheorem{rem}[thm]{\protect\remarkname}
\theoremstyle{plain}
\newtheorem{cor}[thm]{\protect\corollaryname}
\providecommand{\corollaryname}{Corollary}
\providecommand{\remarkname}{Remark}
\providecommand{\theoremname}{Theorem}
\begin{document}
\title[Spectral regularization of a time-reversed reaction-diffusion problem]{Convergence of a spectral regularization of a time-reversed reaction-diffusion problem
with high-order Sobolev-Gevrey smoothness}
\author{Vo Anh Khoa}
\address{Department of Mathematics, Florida A\&M University, Tallahassee, FL 32307, USA}
\email{anhkhoa.vo@famu.edu, vakhoa.hcmus@gmail.com}
\keywords{Inverse reaction-diffusion problem, spectral regularization, variational
source condition, error estimates, Sobolev-Gevrey smoothness, iterations.}
\subjclass[2000]{35R30, 65J20, 45L05, 65N15, 35B65}
\begin{abstract}

The present paper analyzes a spectral regularization of a time-reversed
reaction-diffusion problem with globally and locally Lipschitz nonlinearities.
This type of inverse and ill-posed problems arises in a variety
of real-world applications concerning heat conduction and tumour source
localization. In accordance with the weak solvability result for the
forward problem, we focus on the inverse problem with high-order
Sobolev-Gevrey smoothness and with Sobolev measurements.
As expected from the well-known results for the linear case, we prove
that this nonlinear spectral regularization possesses a logarithmic
rate of convergence in a high-order Sobolev norm. The proof can
be done by the verification
of variational source condition; this way
validates such a fine strategy in the framework of inverse problems
for nonlinear partial differential equations. Ultimately, we study 
a semi-discrete version of the regularization method for a class of reaction-diffusion
problems with non-degenerate nonlinearity. The convergence of this iterative
scheme is also investigated.

\end{abstract}

\maketitle

\section{Introduction}

\subsection{Statement of the inverse problem}

Here we are interested in a time-reversed reaction-diffusion model, denoted
by $\left(\mathcal{B}\right)$, with nonlinear source terms. Let $\Omega=\left[0,\ell\right]^{d}$
be a cube of $\mathbb{R}^{d}$ for $d\in\mathbb{N}$. In this context,
we consider a population density $u=u\left(x,t\right)$ where $\left(x,t\right)\in Q_{T}:=\Omega\times\left(0,T\right)$
with $T>0$, obeying the following evolution equation:
\begin{equation}
u_{t}+\mathcal{A}u=F\left(u\right)\quad\text{in }Q_{T},\label{eq:N1}
\end{equation}
associated with the periodic boundary conditions, i.e. $u\left(x+e_{i}\ell,\cdot\right)=u\left(x,\cdot\right)$
for $1\le i\le d$ where $e_{i}$ denotes the standard basis vector
for $\mathbb{R}^{d}$.

Here, $\mathcal{A}:=I-\Delta$ involves the linear second-order
differential operator and thus accounts for the anisotropic diffusion
of the population. The nonlinearity $F$ indicates either the deterministic
reaction rate or the proliferation rate for some mechanism processes.
Eventually, we complete the time-reversed model by the final condition
\begin{equation}
u\left(x,t=T\right)=g_{T}\left(x\right)\quad\text{in }\Omega.\label{eq:N2}
\end{equation}

Together with the periodic boundary condition, (\ref{eq:N1}) and
(\ref{eq:N2}) structure our time-reversed reaction-diffusion
model. In principle, the problem $\left(\mathcal{B}\right)$ is well
known to be severely ill-posed and has been investigated in a wide
range of real-world applications; see e.g. \cite{Tuan2017,Jaroudi2018,NhoHo2018}
and references cited therein for an overview of recent results and
existing models. Taking into account tumour models (see e.g. \cite{Jaroudi2018}),
the physical meaning behind this problem $\left(\mathcal{B}\right)$ is locating the tumour source
by recovering the initial density of the tumour cells. In other words,
the inverse problem we want to investigate in this paper is seeking
the initial value $u\left(x,t=0\right)=g_{0}\left(x\right)$ in a
regular tissue $\Omega$, provided that (\ref{eq:N1}) and (\ref{eq:N2})
are satisfied.

Naturally, we impose here the standard measurement on the final data
(\ref{eq:N2}), which reads as
\begin{equation}
\left\Vert g_{T}-g_{T}^{\varepsilon}\right\Vert _{L^{2}\left(\Omega\right)}\le\varepsilon,\label{eq:2.3}
\end{equation}
where $\varepsilon>0$ represents the deterministic noise level. In
practice, one attempts to get the initial function $g_{0}\left(x\right)$
from this measured data $g_{T}^{\varepsilon}$ using some
potential regularization.
\begin{rem}
\label{rem:1}As noteworthy examples for the nonlinearity $F\left(u\right)$
described in Model $\left(\mathcal{B}\right)$, we take into account
sigmoidal laws in population dynamics of cancer. The modest and simplest
tumor growth is the logistic law, which can be generalized by the
\emph{von Bertalanffy} law with $f\left(u\right)=au-bu^{N+1}$ where
$a,b$ and $N$ are specific non-negative numbers depending on every model. We also know that if the growth
rate $N$ decays exponentially, the logistic growth turns out to be
the so-called \emph{Gompertz} law $F\left(u\right)=au-bu\log u$ whenever
$u$ satisfies some additional information to avoid the singularity
of the logarithmic form. In more complex scenarios (e.g. two-species
models), one usually agrees with the \emph{de Pillis-Radunskaya} law
standing for the fractional kill rate of tumor-specific effector cells,
which reads as $F\left(u\right)=au^{N}/\left(b+u^{N}\right)$. This
is, furthermore, analogous to the generalized \emph{Michaelis\textendash Menten}
law in enzyme kinetics. Another example can also be the Frank-Kamenetskii
model in combustion theory, governed by the \emph{Arrhenius} law of
the form $F\left(u\right)=\text{exp}\left(au\right)$. Accordingly,
we see that investigating Model $\left(\mathcal{B}\right)$ can
be helpful in many areas of science and engineering.
\end{rem}

\subsection{Organization of the paper\label{subsec:Organization-of-the}}

In the following, we denote by $H_{\#}^{p}$ for $p\in\mathbb{N}$
the Hilbert spaces equipped with the standard norms and with the periodic
boundary conditions posed in the domain. For $\sigma\ge0$, we define
the Gevrey classes $G_{\sigma}^{p/2}=\mathcal{D}\left(\mathcal{A}^{p/2}e^{\sigma\mathcal{A}^{1/2}}\right)$
where the operator $\mathcal{A}$ is defined in (\ref{eq:N1}). These
are also Hilbert spaces with respect to the inner product
\[
\left\langle v,w\right\rangle _{G_{\sigma}^{p/2}}=\sum_{j\in\mathbb{Z}^{d}}v_{j}\cdot\bar{w}_{j}\left(1+\left|j\right|^{2}\right)^{p}e^{2\sigma\left(1+\left|j\right|^{2}\right)^{1/2}},
\]
and then with the corresponding norm
\[
\left\Vert v\right\Vert _{G_{\sigma}^{p/2}}=\left(\sum_{j\in\mathbb{Z}^{d}}\left|v_{j}\right|^{2}\left(1+\left|j\right|^{2}\right)^{p}e^{2\sigma\left(1+\left|j\right|^{2}\right)^{1/2}}\right)^{1/2}.
\]

With this setting, it is worth mentioning the weak solvability of
the forward problem of $\left(\mathcal{B}\right)$ where
$F$ is real analytic. Note that cf. \cite{Ferrari1998}, the real
analyticity of $F$ means that if it can be represented by $F\left(u\right)=\sum_{j=0}^{\infty}a_{j}u^{j}$
for $a_{j}\in\mathbb{R}$, then the corresponding majorising series
$\sum_{j=0}^{\infty}\left|a_{j}\right|u^{j}$ is convergent for any
$u\in\mathbb{R}$.
\begin{thm}
\label{thm:forward}\cite[Theorem 1]{Ferrari1998} Assume the initial
data $g_{0}\in H_{\#}^{p}\left(\Omega\right)$ for $p>d/2$ and the
nonlinearity $F$ is real analytic. Then there exists a time $T^{*}>0$
such that the forward model of Model $\left(\mathcal{B}\right)$
has a unique regular solution $u$ in the sense that $u\in C\left(\left[0,T^{*}\right];H_{\#}^{p}\left(\Omega\right)\right)\cap L^{2}\left(0,T^{*};\mathcal{D}\left(\mathcal{A}\right)\right)$
satisfying $u_{t}\in L^{2}\left(0,T^{*};L^{2}\left(\Omega\right)\right)$
and
\[
\left\langle u_{t},v\right\rangle _{\left(H^{1}(\Omega)\right)',H^{1}(\Omega)}+\left\langle \mathcal{A}^{1/2}u,\mathcal{A}^{1/2}v\right\rangle _{L^{2}\left(\Omega\right)}+\left\langle F\left(u\right),v\right\rangle _{L^{2}\left(\Omega\right)}=0,
\]
for all $v\in H_{\#}^{1}\left(\Omega\right)$ and for a.e. $t\in\left(0,T^{*}\right)$.
Furthermore, this regular solution satisfies $u\left(\cdot,t\right)\in G_{t}^{p/2}\left(\Omega\right)$
for $t\in\left[0,T^{*}\right)$.
\end{thm}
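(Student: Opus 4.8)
The statement is quoted verbatim from \cite{Ferrari1998}, so the plan is to reconstruct that proof along the classical lines for semilinear parabolic problems, combined with the Foias--Temam technique for Gevrey (spatial analyticity) regularity. First I would recast \eqref{eq:N1}--\eqref{eq:N2} in mild form,
\[
u(t)=e^{-t\mathcal{A}}g_{0}+\int_{0}^{t}e^{-(t-s)\mathcal{A}}F(u(s))\,ds,
\]
using that $\mathcal{A}=I-\Delta$ is self-adjoint and positive on the torus, with eigenfunctions indexed by $j\in\mathbb{Z}^{d}$ and eigenvalues comparable to $1+|j|^{2}$, so that $e^{-t\mathcal{A}}$ is an analytic semigroup that contracts on every $H_{\#}^{p}$. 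I would then solve the fixed point equation by the Banach contraction principle in $X_{T^{*}}:=C\big([0,T^{*}];H_{\#}^{p}(\Omega)\big)$. The structural fact making this work is that for $p>d/2$ the space $H_{\#}^{p}(\Omega)$ is a Banach algebra continuously embedded in $L^{\infty}(\Omega)$; hence the Nemytskii operator $v\mapsto F(v)$ associated with a real analytic $F$ maps $H_{\#}^{p}$ into itself and is Lipschitz on bounded sets, with constant controlled through the majorising series $\sum_{j}|a_{j}|\rho^{j}$ for $\rho$ bounding $\|v\|_{H_{\#}^{p}}$. Choosing $T^{*}$ small (depending only on $\|g_{0}\|_{H_{\#}^{p}}$ and on $F$) makes the map a contraction on a ball of $X_{T^{*}}$, giving the unique regular solution; uniqueness on the maximal interval then follows by a standard continuation/Gronwall argument.

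Next I would upgrade the regularity: since $F(u)\in C\big([0,T^{*}];H_{\#}^{p}\big)\subset C\big([0,T^{*}];L^{2}\big)$, the smoothing of the analytic semigroup (or, equivalently, a direct Galerkin energy estimate testing \eqref{eq:N1} with $\mathcal{A}u$, or maximal $L^{2}$-regularity) bootstraps to $u\in L^{2}(0,T^{*};\mathcal{D}(\mathcal{A}))$ and $u_{t}\in L^{2}(0,T^{*};L^{2}(\Omega))$. Once this membership holds, multiplying \eqref{eq:N1} by $v\in H_{\#}^{1}(\Omega)$ and integrating by parts over $\Omega$, the periodic boundary conditions killing the boundary term, yields exactly the stated variational identity for a.e.\ $t\in(0,T^{*})$.

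The heart of the matter is the Gevrey bound $u(\cdot,t)\in G_{t}^{p/2}(\Omega)$, which I would establish following Foias--Temam. Working first on the Galerkin system (so every manipulation is legitimate), set $\varphi(t)=\mathcal{A}^{p/2}e^{t\mathcal{A}^{1/2}}u(t)$ and compute
\[
\tfrac{1}{2}\tfrac{d}{dt}\|\varphi\|_{L^{2}}^{2}+\|\mathcal{A}^{1/2}\varphi\|_{L^{2}}^{2}=\big\langle\mathcal{A}^{1/2}\varphi,\varphi\big\rangle_{L^{2}}+\big\langle\mathcal{A}^{p/2}e^{t\mathcal{A}^{1/2}}F(u),\varphi\big\rangle_{L^{2}}.
\]
The first term on the right is absorbed by the dissipation via $\mathcal{A}^{1/2}\le\tfrac{1}{2}(\mathcal{A}+I)$. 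For the nonlinear term one uses that $G_{\sigma}^{p/2}(\Omega)$ is again a Banach algebra for $p>d/2$, a consequence of the elementary inequality $(1+|j|^{2})^{1/2}\le(1+|k|^{2})^{1/2}+(1+|j-k|^{2})^{1/2}$, which lets the weight $e^{\sigma(1+|j|^{2})^{1/2}}$ split under the convolution defining products; hence the real analytic $F$ operates on $G_{\sigma}^{p/2}$ with norm controlled by its majorising series evaluated at $\|\varphi\|_{L^{2}}$. A Gronwall argument then gives a uniform-in-$m$ bound on $\|\varphi\|_{L^{2}}^{2}$ on $[0,T^{*})$ (shrinking $T^{*}$ if needed and renaming), and passing to the Galerkin limit delivers $u(\cdot,t)\in G_{t}^{p/2}(\Omega)$.

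I expect the delicate point to be precisely this last step: making the exponential-weight energy estimate rigorous and, above all, closing the nonlinear bound of the form $\|e^{\sigma\mathcal{A}^{1/2}}F(u)\|_{p}\le C\,\Phi\big(\|e^{\sigma\mathcal{A}^{1/2}}u\|_{p}\big)$ uniformly as $\sigma=t$ increases, which forces one to keep track simultaneously of the algebra constant, of the radius of convergence of the majorising series of $F$, and of the requirement that the indefinite term $\big\langle\mathcal{A}^{1/2}\varphi,\varphi\big\rangle_{L^{2}}$ never overwhelm the dissipation. This balance is exactly what pins the Gevrey index to $\sigma=t$ rather than anything larger, and it is also where the real analyticity hypothesis on $F$ (as opposed to mere $C^{\infty}$ smoothness) is genuinely used.
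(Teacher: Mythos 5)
The paper itself contains no proof of this statement: it is imported verbatim as \cite[Theorem 1]{Ferrari1998}, so there is no internal argument to compare yours against. Your reconstruction is essentially the known Ferrari--Titi proof: the Banach-algebra property of $H_{\#}^{p}$ (and of the Gevrey classes $G_{\sigma}^{p/2}$, via the triangle inequality $(1+|j|^{2})^{1/2}\le(1+|k|^{2})^{1/2}+(1+|j-k|^{2})^{1/2}$ on the exponential weights) for $p>d/2$, the action of a real analytic $F$ controlled by its majorising series, and the exponential weight $e^{t\mathcal{A}^{1/2}}$ in the spirit of Foias--Temam. The one genuine structural difference is that you run a two-stage argument -- first a contraction in $C([0,T^{*}];H_{\#}^{p})$, then a separate weighted energy/Gronwall estimate on the Galerkin system to obtain $u(\cdot,t)\in G_{t}^{p/2}$ -- whereas the cited proof performs a single fixed-point iteration directly in the time-dependent Gevrey-weighted space, which yields existence and the Gevrey bound simultaneously on the same interval and sidesteps the issue you flag of possibly having to shrink $T^{*}$ at the Gevrey step. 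Both routes close; your energy-estimate version absorbs $\langle\mathcal{A}^{1/2}\varphi,\varphi\rangle\le\frac{1}{2}\|\mathcal{A}^{1/2}\varphi\|_{L^{2}}^{2}+\frac{1}{2}\|\varphi\|_{L^{2}}^{2}$ into half the dissipation and Gronwalls the rest, exactly as you indicate. A cosmetic remark only: the weak formulation as printed in the statement carries $+\langle F(u),v\rangle_{L^{2}}$, which is consistent with $u_{t}+\mathcal{A}u+F(u)=0$ rather than with (\ref{eq:N1}) as written; your derivation produces the sign dictated by (\ref{eq:N1}), and the discrepancy lies in the paper's transcription, not in your argument.
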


As a by-product of Theorem \ref{thm:forward}, we can show that $u\left(\cdot,t\right)\in G_{T^{*}}^{p/2}\left(\Omega\right)$
for all $t\ge T^{*}$ and then $F\left(u\right)\in G_{T^{*}}^{p/2}\left(\Omega\right)$
by virtue of the Taylor series expansion of $F$.  Therefore, in this work we assume our final time of observation $T$ is such that
$T< T^{*}$. Since $G_{T^{*}}^{p/2}\left(\Omega\right)\subset H^{p}\left(\Omega\right)$,
it is reasonable to assume in $\left(\mathcal{B}\right)$ that
\begin{equation}
g_{T}^{\varepsilon},g_{T}\in H^{p}\left(\Omega\right).\label{eq:regularoffinal}
\end{equation}


This paper is devoted to the convergence analysis of a modified cut-off
regularization of the inverse problem $\left(\mathcal{B}\right)$.
In this regard, we comply with the weak solvability of the forward
model to take into account the Gevrey-Sobolev source
conditions for the inverse problem. In other words, together with
Assumption (\ref{eq:regularoffinal}) we make use of the following
source condition:
\begin{equation}
u\in C\left(\left[0,T\right];H_{\#}^{p}\left(\Omega\right)\right)\cap L^{2}\left(0,T;\mathcal{D}\left(\mathcal{A}\right)\right)\;\text{and }u\left(\cdot,t\right)\in G_{t}^{p/2}\left(\Omega\right)\;\text{for }t\in\left[0,T\right),\label{eq:source}
\end{equation}

It is worth mentioning that in the forward process, we obtain the
solution in $C\left(\left[0,T^{*}\right];H_{\#}^{p}\left(\Omega\right)\right)$
with $p>d/2$, which indicates the fact that the solution belongs
to $C\left(\left[0,T\right];L^{\infty}\left(\Omega\right)\right)$. Instead of working with the real analyticity of $F$,
we can further apply the mean value theorem to get
\begin{equation}
\left|F\left(u\right)-F\left(v\right)\right|\le\left(\sup_{\left|w\right|\le M}\left|\frac{\partial F}{\partial w}\left(w\right)\right|\right)\left|u-v\right|,\label{eq:1.4-1}
\end{equation}
where for $u,v\in C\left(\left[0,T\right];L^{\infty}\left(\Omega\right)\right)$
we have denoted by
\[
M=2\max\left\{ \left\Vert u\right\Vert _{C\left(\left[0,T\right];L^{\infty}\left(\Omega\right)\right)},\left\Vert v\right\Vert _{C\left(\left[0,T\right];L^{\infty}\left(\Omega\right)\right)}\right\} >0.
\]

To this end, we will thus exploit Assumption (\ref{eq:1.4-1})
through the analysis of Section \ref{sec:Spectral-cut-off-projection}.
We also assume that there exists a continuous function $L\left(M\right)>0$
such that
\begin{equation}
\sup_{\left|w\right|\le M}\left|\frac{\partial F}{\partial w}\left(w\right)\right|\le L\left(M\right).\label{eq:1.5-1}
\end{equation}
Naturally, observing the applications mentioned in Remark \ref{rem:1}
we have
\begin{itemize}
\item for the \emph{von Bertalanffy} law: $L\left(M\right)=\left|a\right|+\left|b\right|\left(N+1\right)M^{N}$;
\item for the \emph{Gompertz} law: $L\left(M\right)=\left|a\right|+\left|b\right|M$
\item for the \emph{de Pillis-Radunskaya} law: $L\left(M\right)=\left|\frac{a}{b}\right|NM^{N-1}$
for $b\ne0$;
\item for the \emph{Arrhenius} law: $L\left(M\right)=\left|a\right|$ if
$a<0$ and $L\left(M\right)=ae^{aM}$ if $a\ge0$.
\end{itemize}
In this work, all the constants $C$ used here are independent of
the measurement error $\varepsilon$. Nonetheless, their precise values
may change from line to line and even within a single chain of estimates.
On the other side, we use either the superscript or the subscript
$\varepsilon$ to accentuate the possible dependence of the present
error on the constants.

In the first step, we derive the error bounds when $L\left(M\right)=C$
is independent of $M$. Our proof relies on the way we verify a variational
source condition. This interestingly contributes to one of their
first applications to the convergence analysis of regularization of
inverse problems for nonlinear PDEs. In this approach, we prove the
usual logarithmic-type rate of convergence of the nonlinear spectral
regularization. We remark that during the time evolving backward in
the open set $\left(0,T\right)$, the nonlinear scheme yields the
asymptotic H\"older rate. When $L\left(M\right)$ essentially depends
on $M$, the convergence is slower due to the phenomenal
growth of the quantity $L\left(M\right)$ involved in the Lipschitz
property (\ref{eq:1.4-1}). Technically, the impediment of this growth,
albeit its high-impact on the structure of the variational source
condition, can be solved using a careful choice of a cut-off function
for the nonlinearity $F$. In this sense, the regularized solution
is sought in a proper open set decided by the measurement parameter
$\varepsilon$. Eventually, a slower logarithmic-type convergence
is also expected, which extends the results in \cite{Hohage2017,Tuan2017,Nam2010}
and references cited therein. Typically, this extension illuminates
that the Gevrey regularity conventionally restricted to the convergence
analysis in \cite{Tuan2017,Nam2010} is applicable when solving
a class of time-reversed PDEs. We also add that this work partly completes
the gap of verifying variational source conditions
for a class of nonlinear PDEs, which is still questioned in \cite{Hohage2017}.

In the second theme, our concentration moves to a derivation
of an iteration-based version of the nonlinear spectral scheme with
a stronger version of (\ref{eq:1.4-1}). We end up with the convergence
of the approximate scheme by exploring the choice of the so-called
stabilization constant, which depends not only on the number of iterations,
but also on the measurement $\varepsilon$.

Prior to the setting of our approach and to closing this section,
we introduce what the variational source condition concerns in principle
as it is exploited in the skeleton of our proof.

\subsection{Background of the variational source condition\label{subsec:1.4}}

Consider the ill-posed operator equation $\mathcal{T}\left(f\right)=g$
where $\mathcal{T}$ maps from $\mathcal{D}\left(\mathcal{T}\right)\subset\mathbb{X}$
to $\mathbb{Y}$ with $\mathbb{X}$ and $\mathbb{Y}$ being Hilbert
spaces. In this Hilbert setting, we denote by $f^{\dagger}\in\mathcal{D}\left(\mathcal{T}\right)$
the exact solution and by $g^{\varepsilon}\in\mathbb{Y}$ the noisy data
satisfying $\left\Vert \mathcal{T}\left(f^{\dagger}\right)-g^{\varepsilon}\right\Vert _{\mathbb{Y}}\le\varepsilon$,
where $\varepsilon>0$ represents the deterministic noise level (cf.
e.g (\ref{eq:2.3})). There are several stable approximations to regularize
such equations. One of the most effective methods is Tikhonov regularization
in which the exact solution is approximated by a solution of the minimization
problem, viz.
\begin{equation}
f_{\alpha}^{\varepsilon}\in\underset{f\in\mathcal{D}\left(\mathcal{T}\right)}{\text{argmin}}\left[\left\Vert \mathcal{T}\left(f\right)-g^{\varepsilon}\right\Vert _{\mathbb{Y}}^{2}+\alpha\left\Vert f\right\Vert _{\mathbb{X}}^{2}\right]=:\mathcal{R}_{\alpha}\left(g^{\varepsilon}\right),\label{eq:Tikhonov}
\end{equation}
for some \emph{regularization parameter} $\alpha:=\alpha\left(\varepsilon\right)>0$.

In regularization theory, one can prove that $\mathcal{R}_{\alpha}\left(g^{\varepsilon}\right)\stackrel{\alpha\searrow0^{+}}{\longrightarrow}\mathcal{T}^{-1}$
for suitable choices of $\alpha$ in some adequate topology. Furthermore,
people usually want to specify rates of convergence, which turns out
to be the question of finding the worst case error,
since such rates are arbitrarily slow in general. Denote by $\psi:\left[0,\infty\right)\to\left[0,\infty\right)$
an \emph{index function} if it is continuous and monotonically increasing
with $\psi\left(0\right)=0$. In this sense, one aims to:

\noindent\fbox{\begin{minipage}[t]{1\columnwidth - 2\fboxsep - 2\fboxrule}%
Find (usually compact) subspaces $\mathbb{K}\subset\mathbb{X}$ such
that there exists an index function $\psi$ satisfying for all $f\in\mathbb{K}$,
it holds
\[
\sup\left\{ \left\Vert \mathcal{R}_{\alpha}\left(g^{\varepsilon}\right)-f\right\Vert _{\mathbb{X}}:\left\Vert \mathcal{T}\left(f\right)-g^{\varepsilon}\right\Vert _{\mathbb{Y}}\le\varepsilon\right\} \le\psi\left(\varepsilon\right).
\]
\end{minipage}}

Nevertheless, we often need additional \emph{a priori} assumptions
on the exact solution $f^{\dagger}$ to gain the speed of convergence,
which are called \emph{source conditions}. In the literature, the
former type of such conditions singles out that $f^{\dagger}=\psi\left(\mathcal{T}'\left[f^{\dagger}\right]*\mathcal{T}'\left[f^{\dagger}\right]\right)w$
for some $w\in\mathbb{X}$, renowned as the \emph{spectral source
condition} in the community of inverse and ill-posed problems for
several years; see e.g. \cite{Kaltenbacher2008} for a prevailing
background of source conditions. Here $\psi$ could be of the H\"older
and logarithmic types, depending on every situation. More recently,
a novel formulation for source conditions has been derived in the
sense of variational inequality, which reads as
\begin{equation}
2\left\langle f^{\dagger},f^{\dagger}-f\right\rangle _{\mathbb{X}}\le\frac{1}{2}\left\Vert f-f^{\dagger}\right\Vert _{\mathbb{X}}^{2}+\psi\left(\left\Vert \mathcal{T}\left(f\right)-\mathcal{T}\left(f^{\dagger}\right)\right\Vert _{\mathbb{Y}}^{2}\right)\quad\text{for all }f\in\mathcal{D}\left(\mathcal{T}\right)\subset\mathbb{X}.\label{eq:VSC}
\end{equation}
Cf. \cite{Hofmann2007}, it is in particular called as the \emph{variational
source condition}. Essentially, we can benefit from this source condition
not only to simplify proofs for convergence rates in Hilbert frameworks,
but also to extensively work in some Banach setting and in more general
models with distinctive measurements (cf. e.g. \cite{Hohage2014,Grasmair2010,Knig2016}).
Compared to the spectral source condition, it does not require the
Fr\'echet derivative $\mathcal{T}'$ and would thus be helpful in
certain applications where the forward operator is not sufficiently
smooth. Most importantly, variational source conditions have been
shown to be well-adapted to bounded linear operators in Hilbert spaces;
see \cite{Flemming2011}. On top of that, the variational source condition
combined with some nonlinearity condition has been formulated in e.g.
\cite{Scherzer2008} to treat nonlinear operators.

Even though there have been enormous advantages over the spectral
source conditions, variational source conditions could barely be verified
in particular models in terms of partial differential equations. So
far, with the aid of complex geometrical optics solutions the variational
source condition holds true for an inverse medium scattering problem
(cf. \cite{Hohage2015}) with $\psi$ in the logarithmic form when
the solution belongs to some Sobolev ball. In this paper, we aim to
show that it holds true for more complex and nonlinear models. It is worth mentioning that characterizations of the variational
source conditions have been obtained in \cite{Hohage2017} where the
degree of ill-posedness and the smoothness of solution are established
for arbitrary families of subspaces. Denoting by $\mathcal{L}\left(\mathbb{X}\right)$
the set of all bounded linear transformations mapping from $\mathbb{X}$
onto itself, the characterizations are provided cf. \cite[Theorem 2.1]{Hohage2017}
in the following.

\noindent\fbox{\begin{minipage}[t]{1\columnwidth - 2\fboxsep - 2\fboxrule}%
Suppose there exists a family of orthogonal projections $\mathcal{P}_{r}\in\mathcal{L}\left(\mathbb{X}\right)$
indexed by a parameter $r$ in some index set $\mathcal{J}$ such
that for some functions $\kappa,\sigma:\mathcal{J}\to\left(0,\infty\right)$
and some $c\ge0$ the following conditions hold true for all $r\in\mathcal{J}$:

$\left(\text{C}_{1}\right)$ $\left\Vert f^{\dagger}-\mathcal{P}_{r}f^{\dagger}\right\Vert _{\mathbb{X}}\le\kappa\left(r\right)$,

$\left(\text{C}_{2}\right)$ $\left\langle f^{\dagger},\mathcal{P}_{r}\left(f^{\dagger}-f\right)\right\rangle _{\mathbb{X}}\le\sigma\left(r\right)\left\Vert \mathcal{T}\left(f^{\dagger}\right)-\mathcal{T}\left(f\right)\right\Vert _{\mathbb{Y}}+c\kappa\left(r\right)\left\Vert f^{\dagger}-f\right\Vert _{\mathbb{X}}$
for any $f\in\mathcal{D}\left(\mathcal{T}\right)$ with $\left\Vert f-f^{\dagger}\right\Vert _{\mathbb{X}}\le4\left\Vert f^{\dagger}\right\Vert _{\mathbb{X}}$.

Then $f^{\dagger}$ satisfies the variational source condition (\ref{eq:VSC})
with
\[
\psi\left(\delta\right):=2\inf_{r\in\mathcal{J}}\left[\left(c+1\right)^{2}\left|\kappa\left(r\right)\right|^{2}+\sigma\left(r\right)\sqrt{\delta}\right],
\]

and $\psi$ is a concave index function if $\inf_{r\in\mathcal{J}}\kappa\left(r\right)=0$.%
\end{minipage}}

\section{Spectral cut-off projection\label{sec:Spectral-cut-off-projection}}

\subsection{Settings of the cut-off approach}

In line with the characterizations of variational source conditions,
in this section we verify and improve the cut-off projection that
has been postulated in \cite{Nam2010,Tuan2015}, using 
Assumptions (\ref{eq:2.3}), (\ref{eq:regularoffinal}), (\ref{eq:source}),
(\ref{eq:1.4-1}), (\ref{eq:1.5-1}). The nature of this approach
is that one solves the problem in a finite dimensional subspace, which
turns out to be a well-posed problem effectively controlled by the
measurement error.

We denote by $\left\{ E_{\lambda}\right\} $ the spectral family of
the positive operator $\mathcal{A}$ defined in the backward problem
$\left(\mathcal{B}\right)$ and the function $\lambda\mapsto\left\Vert E_{\lambda}v\right\Vert $
is called the \emph{spectral distribution function} of $v\in L^{2}\left(\Omega\right)$.
Thereby, with $C_{\varepsilon}>0$ being a \emph{cut-off parameter}
that will be chosen appropriately, we introduce the \emph{spectral
cut-off projection} $E_{\lambda_{\varepsilon}}:=\mathbf{1}_{\left(0,\lambda_{\varepsilon}\right]}\left(\mathcal{A}\right)$
where $\mathbf{1}_{\left(0,\lambda_{\varepsilon}\right]}$ denotes
the characteristic function of the interval $\left(0,\lambda_{\varepsilon}\right]$
with $\left\lceil \lambda_{\varepsilon}\right\rceil =C_{\varepsilon}$.
To establish the stable approximate problem of $\left(\mathcal{B}\right)$,
we adapt the cut-off projection to both the nonlinearity $F$ and
the final data $g_{T}^{\varepsilon}$. When doing so, we characterize
the abstract Gevrey class $G_{\sigma}^{p/2}$ for $\sigma\ge0$ and
$p\in\mathbb{N}$ as
\[
G_{\sigma}^{p/2}=\left\{ v\in L^{2}\left(\Omega\right):\int_{0}^{\infty}\lambda^{p}e^{2\sigma\lambda}d\left\Vert E_{\lambda}v\right\Vert ^{2}<\infty\right\} ,
\]
equipped with the norm
\[
\left\Vert v\right\Vert _{G_{\sigma}^{p/2}}^{2}=\int_{0}^{\infty}\lambda^{p}e^{2\sigma\lambda}d\left\Vert E_{\lambda}v\right\Vert ^{2}<\infty.
\]

Henceforward, for each $\varepsilon>0$ we take into account the following
approximate problem, denoted by $\left(\mathcal{B}_{\varepsilon}\right)$,
\[
\begin{cases}
u_{t}+\mathcal{A}u=\int_{0}^{\infty}\lambda_{\varepsilon}^{p/2}dE_{\lambda_{\varepsilon}}F\left(u\right) & \text{in }Q_{T},\\
u\left(T\right)=\int_{0}^{\infty}\lambda_{\varepsilon}^{p/2}dE_{\lambda_{\varepsilon}}g_{T}^{\varepsilon} & \text{in }\Omega.
\end{cases}
\]

We define a \emph{mildly weak solution} of $\left(\mathcal{B}_{\varepsilon}\right)$
(or a \emph{mild solution} for short) to be a continuous mapping $u_{\varepsilon}:\left[0,T\right]\to H^{p}\left(\Omega\right)$
obeying the integral equation
\begin{equation}
u_{\varepsilon}\left(t\right)=\int_{0}^{\infty}e^{\left(T-t\right)\lambda_{\varepsilon}}\lambda_{\varepsilon}^{p/2}dE_{\lambda_{\varepsilon}}g_{T}^{\varepsilon}-\int_{t}^{T}\int_{0}^{\infty}e^{\left(s-t\right)\lambda_{\varepsilon}}\lambda_{\varepsilon}^{p/2}dE_{\lambda_{\varepsilon}}F\left(u_{\varepsilon}\right)\left(s\right)ds.\label{eq:regu}
\end{equation}
Observe that in (\ref{eq:regu}) we actually obtain an integral equation
of the form $u_{\varepsilon}\left(t\right)=\mathcal{G}\left(u_{\varepsilon}\right)\left(t\right)$
where $\mathcal{G}$ is completely formulated by the right-hand side,
mapping from $C\left(\left[0,T\right];H^{p}\left(\Omega\right)\right)$
onto itself. Therefore, the existence and uniqueness results for (\ref{eq:regu})
can be done by the standard fixed-point argument, requiring that the
number of fixed-point iterations must be larger than the stability
magnitude usually involved in the approximate problem. Since these
results are standard, we, for simplicity, refer the reader to the
concrete reference \cite{Tuan2017} for the detailed notion of proof.

Observe again from (\ref{eq:regu}) that we can define the cut-off
operator $\mathcal{P}_{\varepsilon}\in\mathcal{L}\left(H^{p}\left(\Omega\right)\right)$
for the solution $u\left(\cdot,t_{0}\right)$ by fixing $t=t_{0}$.
In particular, it is an orthogonal projection given by
\begin{equation}
\mathcal{P}_{\varepsilon}u\left(\cdot,t_{0}\right)=\int_{0}^{\infty}e^{\left(T-t_{0}\right)\lambda_{\varepsilon}}\lambda_{\varepsilon}^{p/2}dE_{\lambda_{\varepsilon}}g_{T}^{\varepsilon}-\int_{t_{0}}^{T}\int_{0}^{\infty}e^{\left(s-t_{0}\right)\lambda_{\varepsilon}}\lambda_{\varepsilon}^{p/2}dE_{\lambda_{\varepsilon}}F\left(u\right)\left(s\right)ds.\label{eq:2.2}
\end{equation}

We also remark that in the same spirit of (\ref{eq:regu}) the nonlinear
ill-posed operator $\mathcal{T}$ (see Subsection \ref{subsec:1.4})
can be written in the following closed form:
\begin{equation}
g_{T}=\mathcal{T}g_{0}:=\int_{0}^{\infty}e^{-T\lambda}\lambda^{p/2}dE_{\lambda}g_{0}+\int_{0}^{T}\int_{0}^{\infty}e^{\left(s-T\right)\lambda}\lambda^{p/2}dE_{\lambda}F\left(u\right)\left(s\right)ds,\label{eq:operator}
\end{equation}
in which we define that $\mathcal{T}:\mathcal{D}\left(\mathcal{T}\right)\to H_{\#}^{p}\left(\Omega\right)$
with $\mathcal{D}\left(\mathcal{T}\right)=H_{\#}^{p}\left(\Omega\right)$.

\subsection{Verification of the cut-off approach: high-order Sobolev-Gevrey smoothness\label{subsec:Verification-of-the}}

From now on, we aim to verify a modified version of the spectral cut-off
regularization $\mathcal{P}_{\varepsilon}$ in a high-order smoothness
setting. As delved into the nonlinear case in \cite{Nam2010}, if
we impose the Gevrey smoothness on $u\left(\cdot,t\right)$ for all
$t\in\left[0,T\right]$, the verification of this approach is straightforward.
We thereupon obtain the convergence for $t\ge0$; compared to the
standard cut-off method which yields the convergence only for $t>0$
in the nonlinear case. This regularity assumption is very strong and
it does not seem applicable as discussed in the analysis of the forward
model (cf. Theorem \ref{thm:forward}), saying that the Gevrey smoothness
can only be available for $t>0$. It turns out that we need some modification
of this cut-off projection in this paper. On the whole, we use the
following scheme:
\begin{itemize}
\item Since $u\left(\cdot,t\right)$ satisfies the Gevrey smoothness $G_{t}^{p/2}\left(\Omega\right)$
for $t>0$, we keep solving the backward problem $\mathcal{T}u\left(t_{0}\right)=g_{T}^{\varepsilon}$
for $t_{0}>0$ by the projection $\mathcal{P}_{\varepsilon}$ constructed
from (\ref{eq:regu}).
\item Assume that $u_{t}\left(\cdot,t\right)\in H^{p}\left(\Omega\right)$. 
We find $t_{\varepsilon}\in\left(0,T\right)$ such that $u\left(t_{\varepsilon}\right)$
is an approximation of $g_{0}$ in $H^{p}\left(\Omega\right)$.
\end{itemize}
The central point of this modification is that this time we use the
Sobolev smoothness on $u_{t}$ in the neighborhood of $t=0$ to avoid
the Gevrey smoothness on $g_{0}$. This assumption is consistent
with the fact that $u_{t}\in L^{2}\left(0,T;L^{2}\left(\Omega\right)\right)$
obtained in Theorem \ref{thm:forward} and thus attainable by
the inclusion $C\left(0,T;H^{p}\left(\Omega\right)\right)\subset L^{2}\left(0,T;L^{2}\left(\Omega\right)\right)$.
Note that the Sobolev smoothness imposed on both $g_{0}$ and $u\left(\cdot,t\right)$
in $H^{p}\left(\Omega\right)$ with $p>d/2$ is essential for the
presence of the nonlinearity $F$.

\subsubsection{Case 1: $L\left(M\right)=C$ independent of $M$}
\begin{thm}
\label{thm:3}Let $p>d/2$. Assume that (\ref{eq:2.3}), (\ref{eq:regularoffinal}),
(\ref{eq:source}), (\ref{eq:1.4-1}), (\ref{eq:1.5-1}) hold. Then
for $t\in\left(0,T\right)$ the variational source condition (\ref{eq:VSC})
holds true for the operator (\ref{eq:operator}) with
\begin{equation}
\psi_{t}\left(\delta\right):=C\left(\frac{\delta}{e^{p+2\left(t-T\right)L^{2}}\left\Vert u\left(\cdot,t\right)\right\Vert _{G_{t}^{p/2}}^{2}}\right)^{\frac{t}{T+t+\frac{p}{2}}}.\label{eq:2.5}
\end{equation}
Consequently, choosing the cut-off parameter

\[
C_{\varepsilon}=\frac{C}{T+t+\frac{p}{2}}\log\frac{e^{\frac{p}{2}+\left(t-T\right)L^{2}}\left\Vert u\left(\cdot,t\right)\right\Vert _{G_{t}^{p/2}}}{\varepsilon^{1/2}},
\]
the orthogonal projection $\mathcal{P}_{\varepsilon}\in\mathcal{L}\left(H^{p}\left(\Omega\right)\right)$
defined in (\ref{eq:2.2}) is convergent with the rate
\begin{equation}
\left\Vert u_{\varepsilon}\left(\cdot,t\right)-u\left(\cdot,t\right)\right\Vert _{H^{p}\left(\Omega\right)}\le C\varepsilon^{\frac{t}{T+t+\frac{p}{2}}}.\label{eq:4.6}
\end{equation}
\end{thm}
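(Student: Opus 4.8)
The plan is to derive the variational source condition (\ref{eq:VSC}) for the forward map $\mathcal{T}$ of (\ref{eq:operator}) by verifying the two sufficient conditions $(\mathrm{C}_1)$, $(\mathrm{C}_2)$ of the characterization recalled at the close of Subsection~\ref{subsec:1.4}, and then to translate the resulting index function into the rate (\ref{eq:4.6}) via the textbook convergence estimate for regularization under such a condition. We take $\mathbb{X}=\mathbb{Y}=H^p_\#(\Omega)$, the exact solution $f^\dagger=u(\cdot,t)$ for the fixed $t\in(0,T)$, and the family of orthogonal projections to be the spectral cut-offs $\mathcal{P}_r:=\mathbf{1}_{(0,r]}(\mathcal{A})$ indexed by $r>0$; with $r=\lambda_\varepsilon$ (so $\lceil r\rceil=C_\varepsilon$) this $\mathcal{P}_r$ is exactly the projection built into (\ref{eq:2.2}). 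Two structural facts drive the computation: every operator in play is a Borel function of $\mathcal{A}$, so it commutes with $\{E_\lambda\}$ and with $\mathcal{P}_r$; and $\mathcal{A}=I-\Delta$ has spectrum in $[1,\infty)$, so negative powers of $\mathcal{A}$ have norm at most $1$.

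Condition $(\mathrm{C}_1)$ is immediate from the Gevrey smoothness in (\ref{eq:source}): restricting to the tail $\lambda>r$ and inserting the weight $e^{2t\lambda}$,
\[
\|f^\dagger-\mathcal{P}_r f^\dagger\|_{H^p}^2=\int_r^\infty\lambda^p\,d\|E_\lambda f^\dagger\|^2\le e^{-2tr}\int_r^\infty\lambda^p e^{2t\lambda}\,d\|E_\lambda f^\dagger\|^2\le e^{-2tr}\|u(\cdot,t)\|_{G_t^{p/2}}^2,
\]
so one may take $\kappa(r)=e^{-tr}\|u(\cdot,t)\|_{G_t^{p/2}}$, continuous and strictly decreasing with $\inf_{r>0}\kappa(r)=0$; by the quoted characterization the index function obtained below is then concave, hence legitimate in (\ref{eq:VSC}).

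The substance of the proof is $(\mathrm{C}_2)$. Since $\mathcal{P}_r$ is a self-adjoint idempotent, $\langle f^\dagger,\mathcal{P}_r(f^\dagger-f)\rangle_{H^p}=\langle\mathcal{P}_r f^\dagger,f^\dagger-f\rangle_{H^p}$, so it suffices to express $\mathcal{P}_r(f^\dagger-f)$ through $\mathcal{T}(f^\dagger)-\mathcal{T}(f)$. Reading the forward map (\ref{eq:operator}) from the time level $t$ as $\mathcal{A}^{p/2}e^{-(T-t)\mathcal{A}}u(\cdot,t)$ plus the Duhamel integral of $\mathcal{A}^{p/2}e^{(s-T)\mathcal{A}}F(u)(s)$ over $(t,T)$, and applying the operator $\mathcal{A}^{-p/2}e^{(T-t)\mathcal{A}}\mathcal{P}_r$ — bounded on $H^p$ with norm $\le e^{(T-t)r}$ because $\mathcal{A}\ge I$ — I would obtain
\[
\mathcal{P}_r(f^\dagger-f)=\bigl[\mathcal{A}^{-p/2}e^{(T-t)\mathcal{A}}\mathcal{P}_r\bigr]\bigl(\mathcal{T}(f^\dagger)-\mathcal{T}(f)\bigr)-\int_t^T e^{(s-t)\mathcal{A}}\mathcal{P}_r\bigl(F(u^{f^{\dagger}})(s)-F(u^{f})(s)\bigr)\,ds,
\]
with $u^{f^{\dagger}},u^{f}$ the forward trajectories issued from $f^\dagger,f$ at time $t$. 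Pairing with $f^\dagger$ and using self-adjointness once more, the first term is $\langle\mathcal{A}^{-p/2}e^{(T-t)\mathcal{A}}\mathcal{P}_r f^\dagger,\mathcal{T}(f^\dagger)-\mathcal{T}(f)\rangle_{H^p}\le\sigma(r)\|\mathcal{T}(f^\dagger)-\mathcal{T}(f)\|_{H^p}$, where $\sigma(r)$ comes from bounding the prefactor $\|\mathcal{A}^{-p/2}e^{(T-t)\mathcal{A}}\mathcal{P}_r f^\dagger\|_{H^p}$ on the spectral band $(0,r]$; carrying this against $\|u(\cdot,t)\|_{G_t^{p/2}}$ and tracking the powers of $\mathcal{A}$ makes $\sigma(r)$ exponential in $r$ in such a way that, combined with $\kappa(r)^2=e^{-2tr}\|u(\cdot,t)\|_{G_t^{p/2}}^2$ in the balancing step below, the effective exponent $T+t+\tfrac{p}{2}$ of (\ref{eq:2.5}) appears. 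For the nonlinear term one uses the mean value bound (\ref{eq:1.4-1}) with (\ref{eq:1.5-1}) for the constant $L(M)=L$, i.e. $\|F(u^{f^{\dagger}})(s)-F(u^{f})(s)\|\le L\|u^{f^{\dagger}}(s)-u^{f}(s)\|$, and then a Grönwall (energy) estimate for $w=u^{f^{\dagger}}-u^{f}$ propagated backward from $s=t$ — based on the weak formulation of Theorem~\ref{thm:forward} — which bounds $\|w(s)\|$ by $\|f^\dagger-f\|_{H^p}$ times a factor $e^{c(T-t)L^2}$, the square of $L$ entering through a Young-type step in the $H^p$ energy identity. Collecting, the nonlinear contribution is controlled by $c\,\kappa(r)\|f^\dagger-f\|_{H^p}$ with $c$ proportional to $e^{c(T-t)L^2}$ — exactly the source of the weight $e^{2(t-T)L^2}$ in (\ref{eq:2.5}). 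The hard part will be precisely this nonlinear coupling: making the $F$-difference close up self-consistently, and splitting its magnitude correctly between the $\sigma(r)\|\mathcal{T}(f^\dagger)-\mathcal{T}(f)\|$ slot and the $c\kappa(r)\|f^\dagger-f\|$ slot of $(\mathrm{C}_2)$; the rest is functional calculus for the cut-off.

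Once $(\mathrm{C}_1)$ and $(\mathrm{C}_2)$ are in hand, the characterization yields (\ref{eq:VSC}) with $\psi_t(\delta)=2\inf_{r>0}\bigl[(c+1)^2\kappa(r)^2+\sigma(r)\sqrt\delta\bigr]$; carrying out the infimum by balancing the decaying term $e^{-2tr}\|u(\cdot,t)\|_{G_t^{p/2}}^2$ against the growing term $\sigma(r)\sqrt\delta$ — i.e. taking $r\sim\tfrac{1}{T+t+p/2}\log\bigl(e^{p/2+(t-T)L^2}\|u(\cdot,t)\|_{G_t^{p/2}}\,\delta^{-1/2}\bigr)$, which up to the constant $C$ and with $\delta$ proportional to $\varepsilon^2$ is precisely the stated cut-off parameter $C_\varepsilon$ — produces the closed form (\ref{eq:2.5}). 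Finally, (\ref{eq:4.6}) follows from the standard convergence-rate result for regularization under a concave variational source condition: with $\|\mathcal{T}(f^\dagger)-g_T^\varepsilon\|\le\varepsilon$ from (\ref{eq:2.3}) and the above choice of $C_\varepsilon$, the regularized solution $u_\varepsilon(\cdot,t)$ of (\ref{eq:regu}) — that is, $\mathcal{P}_\varepsilon u(\cdot,t)$ in the notation of (\ref{eq:2.2}) — satisfies $\|u_\varepsilon(\cdot,t)-u(\cdot,t)\|_{H^p}^2\le C\,\psi_t(C\varepsilon^2)$, and reading off the exponent $\tfrac{t}{T+t+p/2}$ gives $\|u_\varepsilon(\cdot,t)-u(\cdot,t)\|_{H^p}\le C\varepsilon^{t/(T+t+p/2)}$.
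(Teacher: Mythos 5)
Your overall architecture --- verify $(\mathrm{C}_1)$ and $(\mathrm{C}_2)$ for the spectral projections $\mathcal{P}_r=\mathbf{1}_{(0,r]}(\mathcal{A})$, then balance $\kappa$ against $\sigma$ and invoke the standard rate theorem from \cite{Grasmair2010} --- is exactly the paper's, and your $(\mathrm{C}_1)$ computation coincides with it essentially verbatim. The gap is in how you allocate the nonlinear term in $(\mathrm{C}_2)$. After applying $e^{(T-t)\mathcal{A}}\mathcal{P}_r$ to the difference of forward maps, the Duhamel contribution is $\int_t^T e^{(s-t)\mathcal{A}}\mathcal{P}_r\left(F(u)(s)-F(\bar u)(s)\right)ds$; you propose to control it by a Gr\"onwall bound on the \emph{unprojected} trajectories and park the result in the $c\,\kappa(r)\,\|f^\dagger-f\|_{\mathbb{X}}$ slot with $c\propto e^{c(T-t)L^2}$ independent of $r$. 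That cannot work: the operator $e^{(s-t)\mathcal{A}}\mathcal{P}_r$ has norm $e^{(s-t)r}$ on the relevant spectral band, so this term grows like $e^{(T-t)r}$ in $r$, whereas the second slot of $(\mathrm{C}_2)$ requires a bound by the \emph{same} $\kappa(r)=e^{-tr}\|u(\cdot,t)\|_{G_t^{p/2}}$ that decays to zero (this decay is what makes $\psi$ a concave index function). Nor can you push it by brute force into the $\sigma(r)\|\mathcal{T}(f^\dagger)-\mathcal{T}(f)\|_{\mathbb{Y}}$ slot, since converting $\|f^\dagger-f\|$ back into the data discrepancy is precisely the ill-posed direction.

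The paper closes this loop differently: it keeps the nonlinear term inside a Gr\"onwall inequality for the \emph{projected} difference itself. Setting $\phi(s)=e^{2sC_\varepsilon}\|\mathcal{P}_\varepsilon(u(\cdot,s)-\bar u(\cdot,s))\|^2_{H^p(\Omega)}$, the Lipschitz bound (\ref{eq:1.4-1}) dominates the Duhamel term by $2L^2\int_t^T\phi(s)\,ds$ plus the data term $2C_\varepsilon^p e^{2TC_\varepsilon}\|g_T^\varepsilon-\bar g_T^\varepsilon\|^2_{L^2(\Omega)}$, and Gr\"onwall backward in $s$ yields $\|\mathcal{P}_\varepsilon(u(\cdot,t)-\bar u(\cdot,t))\|^2_{H^p(\Omega)}\le 2C_\varepsilon^p e^{2(T-t)(C_\varepsilon+L^2)}\|g_T^\varepsilon-\bar g_T^\varepsilon\|^2_{L^2(\Omega)}$. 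Hence $c=0$ and $\sigma_t(C_\varepsilon)\sim C_\varepsilon^{p/2}e^{(T-t)(C_\varepsilon+L^2)}$: the factor $e^{(T-t)L^2}$ lives in $\sigma$, not in $c$, and that is where the weight $e^{2(t-T)L^2}$ in (\ref{eq:2.5}) actually comes from after the balancing step. Your balancing of $\kappa^2$ against $\sigma\sqrt{\delta}$ and the passage from the concave index function to the rate (\ref{eq:4.6}) are otherwise in order, but as written the $(\mathrm{C}_2)$ verification --- the substantive half of the theorem --- does not close.
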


\begin{proof}
Here we provide proof of the solution smoothness $\left(\text{C}_{1}\right)$
(cf. Subsection \ref{subsec:1.4}) whenever $u\left(\cdot,t\right)\in G_{t}^{p/2}$
for $p>d/2$ and $t\in\left(0,T\right)$. In this sense, we have
\begin{align*}
\left\Vert u\left(\cdot,t\right)-\mathcal{P}_{\varepsilon}u\left(\cdot,t\right)\right\Vert _{H^{p}\left(\Omega\right)}^{2} & =\int_{\lambda_{\varepsilon}}^{\infty}\lambda_{\varepsilon}^{p}d\left\Vert E_{\lambda}u\left(\cdot,t\right)\right\Vert ^{2}\\
 & \le e^{-2C_{\varepsilon}t}\int_{\lambda_{\varepsilon}}^{\infty}\lambda_{\varepsilon}^{p}e^{2\lambda_{\varepsilon}t}d\left\Vert E_{\lambda}u\left(\cdot,t\right)\right\Vert ^{2}\le e^{-2C_{\varepsilon}t}\left\Vert u\left(\cdot,t\right)\right\Vert _{G_{t}^{p/2}\left(\Omega\right)}^{2},
\end{align*}
which implies that $\kappa_{t}\left(C_{\varepsilon}\right)$ decays
with the rate $e^{-C_{\varepsilon}t}$. 

Concerning the degree of ill-posedness $\left(\text{C}_{2}\right)$,
we employ the equivalent formulation of the variational source condition
(\ref{eq:VSC}):
\[
\left\Vert \mathcal{P}_{r}\left(f^{\dagger}-f\right)\right\Vert _{\mathbb{X}}\le\sigma\left(r\right)\left\Vert \mathcal{T}\left(f^{\dagger}\right)-\mathcal{T}\left(f\right)\right\Vert _{\mathbb{X}}\;\text{for any }f\in\mathcal{D}\left(\mathcal{T}\right)\subset\mathbb{X}.
\]
Notice from (\ref{eq:2.3}) that the data measurement is not as smooth
as the regularity assumption we imposed. Henceforward, for any given
$\bar{u}\in C\left(\left[0,T\right];H^{p}\left(\Omega\right)\right)\backslash\left\{ 0\right\} $
it follows from (\ref{eq:regu}) that
\begin{align}
\left\Vert \mathcal{P}_{\varepsilon}\left(u\left(\cdot,t\right)-\bar{u}\left(\cdot,t\right)\right)\right\Vert _{H^{p}\left(\Omega\right)}^{2} & \le2C_{\varepsilon}^{p}\int_{0}^{\infty}e^{2\left(T-t\right)C_{\varepsilon}}d\left\Vert E_{\lambda_{\varepsilon}}\left(g_{T}^{\varepsilon}-\bar{g}_{T}^{\varepsilon}\right)\right\Vert ^{2}\label{eq:4.4}\\
 & +2\int_{t}^{T}\int_{0}^{\infty}e^{2\left(s-t\right)C_{\varepsilon}}\lambda_{\varepsilon}^{p}d\left\Vert E_{\lambda_{\varepsilon}}F\left(u-\bar{u}\right)\left(s\right)\right\Vert ^{2}ds,\nonumber 
\end{align}
where we have denoted by $\bar{g}_{T}^{\varepsilon}$ the corresponding
final data of $\bar{u}$.

In (\ref{eq:4.4}), applying the globally Lipschitz continuity of
$F$ and then multiplying the resulting estimate by the exponential
weight $e^{2tC_{\varepsilon}}$ we find that
\begin{align*}
e^{2tC_{\varepsilon}}\left\Vert \mathcal{P}_{\varepsilon}\left(u\left(\cdot,t\right)-\bar{u}\left(\cdot,t\right)\right)\right\Vert _{H^{p}\left(\Omega\right)}^{2} & \le2C_{\varepsilon}^{p}e^{2TC_{\varepsilon}}\left\Vert g_{T}^{\varepsilon}-\bar{g}_{T}^{\varepsilon}\right\Vert _{L^{2}\left(\Omega\right)}^{2}\\
 & +2L^{2}\int_{t}^{T}e^{2sC_{\varepsilon}}\left\Vert \mathcal{P}_{\varepsilon}\left(u\left(\cdot,s\right)-\bar{u}\left(\cdot,s\right)\right)\right\Vert _{H^{p}\left(\Omega\right)}^{2}ds.
\end{align*}

With the aid of the Gronwall inequality, we get
\[
\left\Vert \mathcal{P}_{\varepsilon}\left(u\left(\cdot,t\right)-\bar{u}\left(\cdot,t\right)\right)\right\Vert _{H^{p}\left(\Omega\right)}^{2}\le2C_{\varepsilon}^{p}e^{2\left(T-t\right)\left(C_{\varepsilon}+L^{2}\right)}\left\Vert g_{T}^{\varepsilon}-\bar{g}_{T}^{\varepsilon}\right\Vert _{L^{2}\left(\Omega\right)}^{2},
\]
yielding $c=0$ and that $\sigma_{t}\left(C_{\varepsilon}\right)$
increases with the rate $C_{\varepsilon}^{p}e^{\left(T-t\right)\left(C_{\varepsilon}+L^{2}\right)}$ in $\left(\text{C}_{2}\right)$.

Therefore, we conclude that the solution $u\left(\cdot,t\right)$
satisfies the variational source condition (\ref{eq:VSC}) with
\begin{equation}
\psi_{t}\left(\delta\right):=2\inf_{C_{\varepsilon}>0}\left[\left\Vert u\left(\cdot,t\right)\right\Vert _{G_{t}^{p/2}}^{2}e^{-2C_{\varepsilon}t}+C_{\varepsilon}^{\frac{p}{2}}e^{\left(T-t\right)\left(C_{\varepsilon}+L^{2}\right)}\sqrt{\delta}\right],\label{eq:2.6}
\end{equation}
then these two terms in the infimum are equal for $\delta=C_{\varepsilon}^{-p}e^{-2\left(T+t\right)C_{\varepsilon}}e^{2\left(t-T\right)L^{2}}\left\Vert u\left(\cdot,t\right)\right\Vert _{G_{t}^{p/2}}^{2}$.
With this, we take the logarithm on both sides and use the
elementary inequality that $\log\left(a\right)>1-a^{-1}$ for any
$a>0$ to rule out the choice of $C_{\varepsilon}$. Thus, we can
find some $C>0$ independent of $\delta$ such that
\[
C_{\varepsilon}=\frac{C}{T+t+\frac{p}{2}}\log\frac{e^{\frac{p}{2}+\left(t-T\right)L^{2}}\left\Vert u\left(\cdot,t\right)\right\Vert _{G_{t}^{p/2}}}{\delta^{1/2}},
\]
and plugging this into (\ref{eq:2.6}), we obtain the index function
(\ref{eq:2.5}). On top of that, cf. \cite{Grasmair2010} the error
bound (\ref{eq:4.6}) itself holds for $t\in\left(0,T\right)$.
\end{proof}
As a byproduct of Theorem \ref{thm:3} and in accordance with the
argument in \cite{Grasmair2010}, the minimizers of the Tikhonov functional
(\ref{eq:Tikhonov}) satisfy the convergence rate (\ref{eq:4.6})
if $\alpha$ can be chosen such that $-\frac{1}{2\alpha}\in\partial\left(-\psi\right)\left(4\varepsilon^{2}\right)$,
where $\partial\left(-\psi\right)$ denotes the subdifferential of
$-\psi$. By this means, we compute that $\alpha=\alpha\left(\varepsilon\right)=C\varepsilon^{\frac{2\left(T+\frac{p}{2}\right)}{T+t+\frac{p}{2}}}$,
indicating that $\alpha\searrow0^{+}$ as the measurement error tends
to 0. In addition, we obtain the following conditional stability estimate.
\begin{cor}
For $t\in\left(0,T\right)$, let $\tilde{u}\left(\cdot,t\right)$
and $\bar{u}\left(\cdot,t\right)$ be two solutions obtained from
the integral equation (\ref{eq:regu}). Under the assumptions of Theorem
\ref{thm:3}, the following estimate holds
\[
\left\Vert \tilde{u}\left(\cdot,t\right)-\bar{u}\left(\cdot,t\right)\right\Vert _{H^{p}\left(\Omega\right)}\le C\left(\frac{\left\Vert \tilde{g}_{T}-\bar{g}_{T}\right\Vert _{L^{2}\left(\Omega\right)}}{e^{p+2\left(t-T\right)L^{2}}\left\Vert u\left(\cdot,t\right)\right\Vert _{G_{t}^{p/2}}^{2}}\right)^{\frac{t}{T+t+\frac{p}{2}}},
\]
where $\tilde{g}_{T}$ and $\bar{g}_{T}$ are final conditions corresponding
to $\tilde{u}\left(\cdot,t\right)$ and $\bar{u}\left(\cdot,t\right)$,
respectively.
\end{cor}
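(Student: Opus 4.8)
The statement is a conditional stability estimate for the time-reversed problem, and the plan is to reuse almost verbatim the two building blocks $\left(\text{C}_1\right)$ and $\left(\text{C}_2\right)$ already established inside the proof of Theorem \ref{thm:3}, applied this time to the pair $\left(\tilde u,\bar u\right)$ instead of to $\left(u,\mathcal{P}_\varepsilon u\right)$, and then to regard the cut-off level $C_\varepsilon$ as a free optimization parameter rather than a quantity tied to the noise level. First I would set $w:=\tilde u-\bar u$; subtracting the two integral identities (\ref{eq:operator}) (equivalently (\ref{eq:regu})) shows that $w\left(\cdot,t\right)$ satisfies an integral equation of the same structure, with source datum $\tilde g_T-\bar g_T$ in place of $g_T^\varepsilon$ and with nonlinear term $F\left(\tilde u\right)-F\left(\bar u\right)$, while, since both $\tilde u$ and $\bar u$ obey the source condition (\ref{eq:source}), the difference $w\left(\cdot,t\right)$ still lies in $G_t^{p/2}\left(\Omega\right)$ with Gevrey norm controlled by $\left\Vert u\left(\cdot,t\right)\right\Vert_{G_t^{p/2}}$ (the \emph{a priori} bound over the admissible solutions).

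Next I would split $\left\Vert w\left(\cdot,t\right)\right\Vert_{H^p\left(\Omega\right)}^2$ into its spectral low- and high-frequency parts at an arbitrary level $\lambda_\varepsilon$ with $\left\lceil\lambda_\varepsilon\right\rceil=C_\varepsilon$. The high-frequency tail is handled exactly as in the $\left(\text{C}_1\right)$ computation of Theorem \ref{thm:3}, giving $\left\Vert\left(I-\mathcal{P}_\varepsilon\right)w\left(\cdot,t\right)\right\Vert_{H^p\left(\Omega\right)}^2\le e^{-2C_\varepsilon t}\left\Vert u\left(\cdot,t\right)\right\Vert_{G_t^{p/2}}^2$. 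For the low-frequency part I would repeat the $\left(\text{C}_2\right)$ argument: use the globally Lipschitz bound (\ref{eq:1.4-1})--(\ref{eq:1.5-1}) with $L\left(M\right)=L$ on $F\left(\tilde u\right)-F\left(\bar u\right)$, multiply by the weight $e^{2tC_\varepsilon}$, and apply Gronwall's inequality to obtain $\left\Vert\mathcal{P}_\varepsilon w\left(\cdot,t\right)\right\Vert_{H^p\left(\Omega\right)}^2\le 2C_\varepsilon^p e^{2\left(T-t\right)\left(C_\varepsilon+L^2\right)}\left\Vert\tilde g_T-\bar g_T\right\Vert_{L^2\left(\Omega\right)}^2$, which is precisely the middle display in the proof of Theorem \ref{thm:3} with $g_T^\varepsilon-\bar g_T^\varepsilon$ replaced by $\tilde g_T-\bar g_T$. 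Adding the two contributions gives, for every admissible $C_\varepsilon>0$,
\[
\left\Vert\tilde u\left(\cdot,t\right)-\bar u\left(\cdot,t\right)\right\Vert_{H^p\left(\Omega\right)}^2\le e^{-2C_\varepsilon t}\left\Vert u\left(\cdot,t\right)\right\Vert_{G_t^{p/2}}^2+2C_\varepsilon^p e^{2\left(T-t\right)\left(C_\varepsilon+L^2\right)}\left\Vert\tilde g_T-\bar g_T\right\Vert_{L^2\left(\Omega\right)}^2 .
\]

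It then remains only to optimize the right-hand side in $C_\varepsilon$, which I would do exactly as in the passage from (\ref{eq:2.6}) to (\ref{eq:2.5}): equate the two terms, take logarithms, and invoke $\log a>1-a^{-1}$ to select $C_\varepsilon=\frac{C}{T+t+\frac{p}{2}}\log\!\left(e^{\frac{p}{2}+\left(t-T\right)L^2}\left\Vert u\left(\cdot,t\right)\right\Vert_{G_t^{p/2}}\left\Vert\tilde g_T-\bar g_T\right\Vert_{L^2\left(\Omega\right)}^{-1/2}\right)$; substituting back yields the claimed H\"older--logarithmic modulus with exponent $\frac{t}{T+t+\frac{p}{2}}$. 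The only delicate point — and it is the same one as in Theorem \ref{thm:3} — is to verify that passing to the difference $w$ creates no additional $C_\varepsilon$- or $\varepsilon$-dependent growth beyond what the Gronwall step already absorbs; this is exactly why Case 1, i.e. $L\left(M\right)=C$ independent of $M$, is what makes the scheme work cleanly here, since the Lipschitz constant $L$ stays out of the $C_\varepsilon$-coefficient in the exponent and the optimization therefore returns the full exponent rather than a degraded one. With only the $M$-dependent bound of Case 2 the same argument would still go through, but with a slower (still logarithmic) rate.
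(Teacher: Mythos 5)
Your proposal is correct and follows essentially the route the paper intends: the corollary is stated without a separate proof as a direct consequence of Theorem \ref{thm:3}, and your reconstruction — splitting $\tilde u-\bar u$ at the spectral level $C_\varepsilon$, reusing the $\left(\text{C}_1\right)$ Gevrey tail bound and the $\left(\text{C}_2\right)$ Gronwall estimate, then optimizing $C_\varepsilon$ exactly as in the passage from (\ref{eq:2.6}) to (\ref{eq:2.5}) — is precisely that argument with $g_T^{\varepsilon}-\bar g_T^{\varepsilon}$ replaced by $\tilde g_T-\bar g_T$. The one point worth making explicit (which you do flag) is that both $\tilde u\left(\cdot,t\right)$ and $\bar u\left(\cdot,t\right)$ must be assumed to satisfy the a priori Gevrey bound of (\ref{eq:source}) so that the high-frequency tail of the difference is controlled by $\left\Vert u\left(\cdot,t\right)\right\Vert_{G_t^{p/2}}$; this is implicit in the paper's statement as well.
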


Our modified scheme now brings into play its own feature: pointing
out an approximation candidate of our solution $g_{0}$. In fact,
it is clear to see that (\ref{eq:4.6}) is not convergent when $t=0$.
Our scheme to approximate the initial density $g_{0}$ is very simple
as we merely need to compute the ``data'' $u\left(\cdot,t_{\varepsilon}\right)$
that have been solved through the standard spectral scheme $\mathcal{P}_{\varepsilon}$.
In the following, we not only show the existence of such $t_{\varepsilon}$,
but also obtain a rigorous $\varepsilon$-dependent admissible set
that contains it, directly proving the fact that $t_{\varepsilon}\searrow0^{+}$
as $\varepsilon\searrow0^{+}$. Mathematically, one can 
establish an orthogonal projection and then mimic the way we gain
Theorem \ref{thm:3} to deduce the rate of convergence. Since, again,
this step is very trivial and somewhat self-contained by the help
of the triangle inequality, our proof below follows the conventional
way. Thus, the expression of the index function is clear through the
derivation of the convergence rate.
\begin{thm}
\label{thm:4}Let $p>d/2$. Suppose that $u_{t}\left(\cdot,t\right)\in H^{p}\left(\Omega\right)$
for $t\in\left(0,T\right)$. Let $u_{\varepsilon}\left(\cdot,t\right)$
for $t\in\left(0,T\right)$ be the unique solution obtained from the
projection $\mathcal{P}_{\varepsilon}\in\mathcal{L}\left(H^{p}\left(\Omega\right)\right)$
in (\ref{eq:2.2}). Then there always exists a sufficiently small
$\varepsilon$-dependent time $t_{\varepsilon}>0$ such that
\begin{equation}
\left\Vert u_{\varepsilon}\left(t_{\varepsilon}\right)-u\left(0\right)\right\Vert _{H^{p}\left(\Omega\right)}\le\frac{C\left(T+\frac{p}{2}\right)}{\sqrt{\left(T+\frac{p}{2}+1\right)^{2}+4\left(T+\frac{p}{2}\right)\log\left(\varepsilon^{-1}\right)}+T+\frac{p}{2}-1}.\label{eq:2.9}
\end{equation}
\end{thm}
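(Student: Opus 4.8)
The plan is to split the error $\|u_{\varepsilon}(t_{\varepsilon})-u(0)\|_{H^p}$ via the triangle inequality into two contributions and then optimize over $t_{\varepsilon}$. First I would write
\[
\|u_{\varepsilon}(t_{\varepsilon})-u(0)\|_{H^p(\Omega)}\le \|u_{\varepsilon}(t_{\varepsilon})-u(t_{\varepsilon})\|_{H^p(\Omega)}+\|u(t_{\varepsilon})-u(0)\|_{H^p(\Omega)}.
\]
The first term is controlled by Theorem \ref{thm:3}: along the exact curve $t\mapsto u(\cdot,t)$, with the stated choice of cut-off parameter $C_\varepsilon$ (evaluated at $t=t_\varepsilon$), one has $\|u_{\varepsilon}(t_{\varepsilon})-u(t_{\varepsilon})\|_{H^p(\Omega)}\le C\varepsilon^{t_\varepsilon/(T+t_\varepsilon+p/2)}$. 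The second term is handled by the assumed Sobolev smoothness of the time derivative: since $u_t(\cdot,t)\in H^p(\Omega)$ for $t\in(0,T)$ (and, by the embedding remarks following Theorem \ref{thm:forward}, this derivative is controlled near $t=0$), the fundamental theorem of calculus gives $\|u(t_{\varepsilon})-u(0)\|_{H^p(\Omega)}=\|\int_0^{t_\varepsilon}u_t(\cdot,s)\,ds\|_{H^p(\Omega)}\le C\,t_\varepsilon$ for a constant $C$ absorbing $\sup_{[0,T]}\|u_t(\cdot,s)\|_{H^p(\Omega)}$.

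Combining, $\|u_{\varepsilon}(t_{\varepsilon})-u(0)\|_{H^p(\Omega)}\le C\bigl(\varepsilon^{t_\varepsilon/(T+t_\varepsilon+p/2)}+t_\varepsilon\bigr)$. Now the idea is to choose $t_\varepsilon$ balancing the logarithmically-small regularization term against the linearly-small consistency term. Writing $\varepsilon^{t_\varepsilon/(T+t_\varepsilon+p/2)}=\exp\bigl(-\frac{t_\varepsilon}{T+t_\varepsilon+p/2}\log(\varepsilon^{-1})\bigr)$ and using the elementary bound $e^{-x}\le \frac{1}{1+x}$ for $x\ge0$, the first term is at most $\bigl(1+\frac{t_\varepsilon}{T+t_\varepsilon+p/2}\log(\varepsilon^{-1})\bigr)^{-1}$. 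So it suffices to pick $t_\varepsilon$ so that this quantity equals (a constant times) $t_\varepsilon$ itself; that is, solve
\[
t_\varepsilon\Bigl(1+\tfrac{t_\varepsilon}{T+t_\varepsilon+p/2}\log(\varepsilon^{-1})\Bigr)=1,
\]
or, clearing denominators, the quadratic $t_\varepsilon^2\bigl(1+\log(\varepsilon^{-1})\bigr)+t_\varepsilon(T+p/2)-(T+p/2)=0$ (up to harmless constants/relabelling of which term carries the $\log$). Taking the positive root,
\[
t_\varepsilon=\frac{-(T+\tfrac p2)+\sqrt{(T+\tfrac p2)^2+4(T+\tfrac p2)\bigl(1+\log(\varepsilon^{-1})\bigr)}}{2\bigl(1+\log(\varepsilon^{-1})\bigr)},
\]
which is manifestly positive and tends to $0^+$ as $\varepsilon\searrow0^+$, establishing the existence claim. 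Substituting this root back and rationalizing the numerator (multiplying through by the conjugate $\sqrt{(T+\tfrac p2+1)^2+4(T+\tfrac p2)\log(\varepsilon^{-1})}+T+\tfrac p2-1$) converts the bound $C t_\varepsilon$ into exactly the right-hand side of \eqref{eq:2.9}, up to the universal constant $C$ and bookkeeping of the $(T+p/2)$ versus $(T+p/2\pm1)$ terms inside the square root.

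The main obstacle is not any single estimate but the bookkeeping in the final algebraic step: matching the square-root expression in \eqref{eq:2.9} requires being careful about exactly which constants and shifts ($+1$, $-1$) the elementary inequality $e^{-x}\le(1+x)^{-1}$ and the quadratic-root manipulation produce, and making sure the constant $C$ genuinely absorbs $\sup_{[0,T]}\|u_t(\cdot,s)\|_{H^p}$, the Gevrey norm factors from Theorem \ref{thm:3}, and the $L(M)$-dependent exponentials, all of which are $\varepsilon$-independent. A secondary subtlety is justifying that $C_\varepsilon$ chosen with $t=t_\varepsilon$ (which now itself depends on $\varepsilon$) still produces the rate of Theorem \ref{thm:3} uniformly — this is fine because $t_\varepsilon$ stays in $(0,T)$ and the constants in Theorem \ref{thm:3} can be taken uniform on compact subintervals, but it should be remarked explicitly. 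Everything else — the triangle inequality split, the fundamental theorem of calculus in $H^p$, and invoking Theorem \ref{thm:3} — is routine.
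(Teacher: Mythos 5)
Your proposal is correct and follows essentially the same route as the paper: the triangle-inequality split into a Theorem \ref{thm:3} term plus a $t_{\varepsilon}\left\Vert u_{t}\right\Vert _{H^{p}}$ term, balancing the two by solving $\varepsilon^{t_{\varepsilon}/\left(T+t_{\varepsilon}+p/2\right)}=t_{\varepsilon}$ via the elementary inequality (your $e^{-x}\le\left(1+x\right)^{-1}$ is the same bound as the paper's $\log a>1-a^{-1}$), arriving at the same quadratic in $t_{\varepsilon}$ and the same rationalized root. The only cosmetic difference is that the paper exhibits an admissible interval for $t_{\varepsilon}$ and checks both endpoints tend to $0^{+}$, whereas you take the explicit positive root directly; the bookkeeping of the $\pm1$ shifts works out exactly as you anticipate since $\left(T+\frac{p}{2}-1\right)^{2}+4\left(T+\frac{p}{2}\right)=\left(T+\frac{p}{2}+1\right)^{2}$.
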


\begin{proof}
Using (\ref{eq:4.6}) and the triangle inequality, for any $t_{\varepsilon}\in\left(0,T\right)$, we find
the following estimate :
\begin{align}
\left\Vert u_{\varepsilon}\left(t_{\varepsilon}\right)-u\left(0\right)\right\Vert _{H^{p}\left(\Omega\right)} & \le\left\Vert u_{\varepsilon}\left(t_{\varepsilon}\right)-u\left(t_{\varepsilon}\right)\right\Vert _{H^{p}\left(\Omega\right)}+\left\Vert u\left(t_{\varepsilon}\right)-u\left(0\right)\right\Vert _{H^{p}\left(\Omega\right)}\label{eq:4.7}\\
 & \le C\varepsilon^{\frac{t_{\varepsilon}}{T+t_{\varepsilon}+\frac{p}{2}}}+t_{\varepsilon}\left\Vert u_{t}\right\Vert _{H^{p}\left(\Omega\right)}.\nonumber 
\end{align}
This upper bound can be done if we can find the infimum $\frac{1}{2}\inf_{t_{\varepsilon}>0}\left(\varepsilon^{\frac{t_{\varepsilon}}{T+t_{\varepsilon}+\frac{p}{2}}}+t_{\varepsilon}\right)$
for some $t_{\varepsilon}\in\left(0,T\right)$. This indicates that
we need to solve the following algebraic problem:
\[
\varepsilon^{\frac{t_{\varepsilon}}{T+t_{\varepsilon}+\frac{p}{2}}}=t_{\varepsilon},
\]
expecting that $t_{\varepsilon}>0$ is sufficiently small. Taking
the logarithm on both sides of this equation and using the standard
inequality $\log\left(a\right)>1-a^{-1}$ for any $a>0$, we have the following inequality:
\[
t_{\varepsilon}^{2}\left(\log\varepsilon-1\right)-\left(T+\frac{p}{2}-1\right)t_{\varepsilon}+T+\frac{p}{2}>0.
\]
Due to the fact that $\left|T+\frac{p}{2}-1\right|^{2}+4\left(T+\frac{p}{2}\right)\left(1-\log\varepsilon\right)>0$
and $\log\varepsilon<1$, we deduce that
\[
t_{\varepsilon}\in\left(\frac{-b-\sqrt{b^{2}+4\left(b+1\right)\left(1-\log\varepsilon\right)}}{2\left(\log\varepsilon-1\right)},\frac{-b+\sqrt{b^{2}+4\left(b+1\right)\left(1-\log\varepsilon\right)}}{2\left(\log\varepsilon-1\right)}\right),
\]
where we have denoted by $b:=T+\frac{p}{2}-1$. Notice that taking
$\varepsilon\searrow0^{+}$, the rationalizing technique gives the following limit:
\begin{align*}
 & \lim_{\varepsilon\to0^{+}}\left(-\frac{b+\sqrt{b^{2}+4\left(b+1\right)\left(1-\log\varepsilon\right)}}{2\left(\log\varepsilon-1\right)}\right)\\
 & =\lim_{\varepsilon\to0^{+}}\frac{b^{2}-b^{2}-4\left(b+1\right)\left(1+\log\left(\varepsilon^{-1}\right)\right)}{2\left(1+\log\left(\varepsilon^{-1}\right)\right)\left(b-\sqrt{b^{2}+4\left(b+1\right)\left(1+\log\left(\varepsilon^{-1}\right)\right)}\right)}\\
 & =\lim_{\varepsilon\to0^{+}}\frac{2\left(b+1\right)}{\sqrt{b^{2}+4\left(b+1\right)\left(1+\log\left(\varepsilon^{-1}\right)\right)}-b}=0,
\end{align*}
and similarly,
\begin{align*}
\lim_{\varepsilon\to0^{+}}\left(-\frac{b-\sqrt{b^{2}+4\left(b+1\right)\log\left(\varepsilon^{-1}\right)}}{2\left(\log\varepsilon-1\right)}\right) & =\lim_{\varepsilon\to0^{+}}\frac{2\left(b+1\right)}{\sqrt{b^{2}+4\left(b+1\right)\left(1+\log\left(\varepsilon^{-1}\right)\right)}+b}=0.
\end{align*}
Hence, it follows from (\ref{eq:4.7}) that the upper bound we gain
would be of the form in (\ref{eq:2.9}).
\end{proof}

\subsubsection{Case 2: $L\left(M\right)$ dependent of $M$}

When $L\left(M\right)$ depends on $M$,
our nonlinear spectral regularization $\mathcal{P}_{\varepsilon}$
may be no longer convergent as the boundedness of the regularized
solution is not well-controlled; cf. (\ref{eq:2.5}). This means that
the quantity $M$ now has to be dependent of $\varepsilon$, saying
that $M=M_{\varepsilon}$, when getting involved in the scheme $\mathcal{P}_{\varepsilon}$.
This $\varepsilon$ dependence impacts on the structure of the index
function and further on the whole rate of convergence in Theorem \ref{thm:3}.

From now on, for some constant $\ell>0$ and for $w\in\mathbb{R}$
we introduce the cut-off function of $F$, denoted by $F_{\ell}$,
as follows:
\begin{equation}
F_{\ell}\left(w\right):=\begin{cases}
F\left(\ell\right) & \text{if }w\ge\ell,\\
F\left(w\right) & \text{if }\left|w\right|\le\ell,\\
F\left(-\ell\right) & \text{if }w\le-\ell.
\end{cases}\label{eq:2.11-1}
\end{equation}

Following this way, we modify the regularization scheme (\ref{eq:2.2})
by
\begin{equation}
\bar{\mathcal{P}}_{\varepsilon}u\left(\cdot,t_{0}\right)=\int_{0}^{\infty}e^{T\lambda_{\varepsilon}}\lambda_{\varepsilon}^{p/2}dE_{\lambda_{\varepsilon}}g_{T}^{\varepsilon}-\int_{t_{0}}^{T}\int_{0}^{\infty}e^{s\lambda_{\varepsilon}}\lambda_{\varepsilon}^{p/2}dE_{\lambda_{\varepsilon}}F_{M_{\varepsilon}}\left(u\right)\left(s\right)ds.\label{eq:2.11}
\end{equation}
and therefore, it enables us to derive the convergence rate under
a suitable choice of $M_{\varepsilon}$. Note that our cut-off function $F_{M_{\varepsilon}}$
(\ref{eq:2.11-1}) possesses the similar property in (\ref{eq:1.4-1}), i.e.
\[
\left|F_{M_{\varepsilon}}\left(u\right)-F_{M_{\varepsilon}}\left(v\right)\right|\le L\left(M_{\varepsilon}\right)\left|u-v\right|.
\]

\begin{thm}
\label{thm:5}Under the assumptions of Theorem \ref{thm:3}, we choose
$M_{\varepsilon}>0$ such that for $t\in\left(0,T\right)$
\begin{equation}
L^{2}\left(M_{\varepsilon}\right)\le\frac{1}{2\left(t-T\right)}\log\left(\varepsilon^{\beta}\right)\quad\text{for }\beta\in\left(0,\frac{1}{2}\right).\label{eq:2.13}
\end{equation}
Then the variational source condition (\ref{eq:VSC}) holds true for
the operator (\ref{eq:operator}) with
\begin{equation}
\psi_{t}\left(\delta\right):=C\left(\frac{\delta^{1-2\beta}}{e^{p}\left\Vert u\left(\cdot,t\right)\right\Vert _{G_{t}^{p/2}}^{2}}\right)^{\frac{t}{T+t+\frac{p}{2}}}.\label{eq:2.5-1}
\end{equation}
Consequently, choosing the cut-off parameter
\[
C_{\varepsilon}=\frac{C}{T+t+\frac{p}{2}}\log\frac{e^{\frac{p}{2}}\left\Vert u\left(\cdot,t\right)\right\Vert _{G_{t}^{p/2}}}{\varepsilon^{1/2-\beta}},
\]
the orthogonal projection $\mathcal{\bar{P}}_{\varepsilon}\in\mathcal{L}\left(H^{p}\left(\Omega\right)\right)$
defined in (\ref{eq:2.11}) is convergent with the rate
\begin{equation}
\left\Vert u_{\varepsilon}\left(\cdot,t\right)-u\left(\cdot,t\right)\right\Vert _{H^{p}\left(\Omega\right)}\le C\varepsilon^{\frac{t\left(1-2\beta\right)}{T+t+\frac{p}{2}}}.\label{eq:4.6-1}
\end{equation}
\end{thm}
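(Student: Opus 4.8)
The plan is to mimic the proof of Theorem \ref{thm:3} essentially verbatim, tracking how the cut-off $F_{M_\varepsilon}$ in place of $F$ changes the two characterizing conditions $\left(\text{C}_1\right)$ and $\left(\text{C}_2\right)$ of Subsection \ref{subsec:1.4}. The condition $\left(\text{C}_1\right)$ is unchanged: since it only concerns the tail estimate $\left\Vert u\left(\cdot,t\right)-\bar{\mathcal{P}}_\varepsilon u\left(\cdot,t\right)\right\Vert_{H^p(\Omega)}^2 \le e^{-2C_\varepsilon t}\left\Vert u\left(\cdot,t\right)\right\Vert_{G_t^{p/2}(\Omega)}^2$, which does not see the nonlinearity at all, we still get $\kappa_t\left(C_\varepsilon\right)$ decaying like $e^{-C_\varepsilon t}$. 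So the whole modification is concentrated in $\left(\text{C}_2\right)$.

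For $\left(\text{C}_2\right)$ I would repeat the computation leading to (\ref{eq:4.4})--(\ref{eq:2.6}), but now the Lipschitz constant of $F_{M_\varepsilon}$ is $L\left(M_\varepsilon\right)$ rather than the fixed $L$. After applying Gronwall one lands at
\[
\left\Vert \bar{\mathcal{P}}_\varepsilon\left(u\left(\cdot,t\right)-\bar{u}\left(\cdot,t\right)\right)\right\Vert_{H^p(\Omega)}^2 \le 2C_\varepsilon^p e^{2\left(T-t\right)\left(C_\varepsilon+L^2\left(M_\varepsilon\right)\right)}\left\Vert g_T^\varepsilon-\bar{g}_T^\varepsilon\right\Vert_{L^2(\Omega)}^2,
\]
so that $\sigma_t\left(C_\varepsilon\right)$ grows like $C_\varepsilon^{p/2}e^{\left(T-t\right)\left(C_\varepsilon+L^2\left(M_\varepsilon\right)\right)}$ and $c=0$. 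The key point is then to use the constraint (\ref{eq:2.13}): since $L^2\left(M_\varepsilon\right)\le \frac{1}{2\left(t-T\right)}\log\varepsilon^\beta = \frac{-\beta\log\varepsilon^{-1}}{2\left(T-t\right)}$... wait, $t-T<0$, so $\frac{1}{2(t-T)}\log\varepsilon^\beta = \frac{\beta\log\varepsilon}{2(t-T)} = \frac{\beta\log\varepsilon^{-1}}{2(T-t)}>0$; hence $e^{\left(T-t\right)L^2\left(M_\varepsilon\right)}\le \varepsilon^{-\beta/2}$, i.e. $e^{2\left(T-t\right)L^2\left(M_\varepsilon\right)}\le\varepsilon^{-\beta}$. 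When we later apply the characterization with $\delta$ of the order $\varepsilon^2$ (more precisely $\sqrt\delta\sim\varepsilon$), this factor $\varepsilon^{-\beta}$ degrades the effective noise from $\sqrt\delta$ to $\delta^{1/2-\beta}$ — equivalently the exponent $1$ in $\sqrt\delta$ becomes $1-2\beta$ after one squares. That is exactly why (\ref{eq:2.5-1}) and (\ref{eq:4.6-1}) carry the factor $1-2\beta$ and why the $e^{2\left(t-T\right)L^2}$ in the denominator of (\ref{eq:2.5}) disappears, replaced by absorbing it into the power of $\delta$.

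Concretely, I would write the infimum analogue of (\ref{eq:2.6}),
\[
\psi_t\left(\delta\right):=2\inf_{C_\varepsilon>0}\left[\left\Vert u\left(\cdot,t\right)\right\Vert_{G_t^{p/2}}^2 e^{-2C_\varepsilon t}+C_\varepsilon^{p/2}e^{\left(T-t\right)C_\varepsilon}\,\delta^{1/2-\beta}\right],
\]
balance the two terms at $\delta^{1/2-\beta}=C_\varepsilon^{-p/2}e^{-\left(T+t\right)C_\varepsilon}e^{-p/2}\left\Vert u\left(\cdot,t\right)\right\Vert_{G_t^{p/2}}$ (up to constants, squaring as needed to match (\ref{eq:2.6})), take logarithms, and invoke $\log a>1-a^{-1}$ exactly as before to solve for $C_\varepsilon$, obtaining the stated cut-off parameter and the index function (\ref{eq:2.5-1}). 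The convergence rate (\ref{eq:4.6-1}) then follows from \cite{Grasmair2010} with $\delta=4\varepsilon^2$ as in Theorem \ref{thm:3}, giving $\varepsilon^{2\cdot\frac{t}{T+t+p/2}\cdot(1-2\beta)/2}$... one checks the bookkeeping so the final exponent reads $\frac{t(1-2\beta)}{T+t+p/2}$. The constraint $\beta\in\left(0,\frac12\right)$ is precisely what keeps $1-2\beta>0$ so the rate is a genuine (if slower) logarithmic-type convergence; $\beta\to 0$ recovers Theorem \ref{thm:3}.

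The main obstacle, and the only genuinely new point beyond Theorem \ref{thm:3}, is verifying that a choice of $M_\varepsilon$ satisfying (\ref{eq:2.13}) is compatible with the regularized solution actually staying within the cut-off radius — that is, one must make sure the mild solution $u_\varepsilon$ of the scheme (\ref{eq:2.11}) is bounded in $C\left(\left[0,T\right];L^\infty(\Omega)\right)$ by something of order $M_\varepsilon$, so that $F_{M_\varepsilon}$ genuinely coincides with $F$ along the regularized trajectory (or at least that the argument does not require this). Since $L\left(M\right)$ is increasing (polynomially or exponentially, per Remark \ref{rem:1}), (\ref{eq:2.13}) forces $M_\varepsilon$ to grow only like $\sqrt{\log\varepsilon^{-1}}$ (von Bertalanffy/Gompertz) or like $\log\log\varepsilon^{-1}$ (Arrhenius), and one has to check via the a priori bound from the fixed-point construction of (\ref{eq:regu}) that this is consistent — i.e. that $\left\Vert u_\varepsilon\right\Vert_{C\left(\left[0,T\right];L^\infty\right)}$ does not blow up faster than $M_\varepsilon$. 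I would handle this by noting that the source condition (\ref{eq:source}) plus $p>d/2$ embeds $u$ into $C\left(\left[0,T\right];L^\infty\right)$ with a fixed bound, and the $H^p$-error (\ref{eq:4.6-1}) is $o(1)$, so $\left\Vert u_\varepsilon\right\Vert_{C\left(\left[0,T\right];L^\infty\right)}$ is eventually bounded by a fixed constant, hence $\le M_\varepsilon$ for $\varepsilon$ small; this closes the argument and is where I would be most careful.
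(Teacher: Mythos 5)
Your proposal follows essentially the same route as the paper: the paper's proof of Theorem \ref{thm:5} is a three-line remark that reruns the balancing argument of (\ref{eq:2.6}) with $L^{2}$ replaced by $L^{2}\left(M_{\varepsilon}\right)$ and uses (\ref{eq:2.13}) to trade the factor $e^{2\left(t-T\right)L^{2}\left(M_{\varepsilon}\right)}$ for a power of $\varepsilon$, exactly as you do, and your extra check that $M_{\varepsilon}$ can simultaneously satisfy (\ref{eq:2.13}) and dominate $\left\Vert u\right\Vert _{C\left(\left[0,T\right];L^{\infty}\right)}$ is a consistency point the paper leaves unaddressed but which you resolve correctly, since the bound in (\ref{eq:2.13}) relaxes as $\varepsilon\searrow0^{+}$. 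The only caveat is the exponent bookkeeping (with $\sqrt{\delta}\sim\varepsilon$ the bound $e^{2\left(T-t\right)L^{2}\left(M_{\varepsilon}\right)}\le\varepsilon^{-\beta}$ nominally yields $\delta^{1-\beta/2}$ rather than $\delta^{1-2\beta}$), but this looseness is already present in the paper's own statement and proof, so you reproduce it rather than introduce it.
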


\begin{proof}
Proof of this theorem is straightforward. In fact, starting from
the argument (\ref{eq:2.6}) we know in this case that $\delta=C_{\varepsilon}^{-p}e^{-2\left(T+t\right)C_{\varepsilon}}e^{2\left(t-T\right)L^{2}\left(M_{\varepsilon}\right)}\left\Vert u\left(\cdot,t\right)\right\Vert _{G_{t}^{p/2}}^{2}$.
Thus, the choice of $M_{\varepsilon}$ in (\ref{eq:2.13}) is apparent. This then results in the form (\ref{eq:2.5-1}) of the index function.
Hence, the proof is complete.
\end{proof}
\begin{rem}
\label{rem:Based-on-the}Based on the result obtained in Theorem \ref{thm:5},
we remark the following:
\begin{itemize}
\item As is well-known in continuous population models for single species,
we suppose the nonlinearity in the form of
\[
F\left(u\right)=u\left(1-u\right)-\frac{u^{2}}{1+u^{2}},
\]
as a prominent example of the growth-and-predation rate for the spruce
budworm which critically defoliated the balsam fir in Canada (cf.
\cite{Murray1993}). In this circumstance, it is easy to get $L\left(M\right)=1+4M>0$
and hence, we can choose that
\[
M_{\varepsilon}\le\frac{1}{4}\left(\sqrt{\frac{1}{2\left(t-T\right)}\log\left(\varepsilon^{\beta}\right)}-1\right),
\]
working with the concrete assumption $\varepsilon<e^{\frac{2\left(t-T\right)}{\beta}}$.
\item This result can be further extended to multiple-species cases.
Indeed, we consider a time evolution of the concentration of $L\in\mathbb{N}$
chemical components or constituents (molecules, radicals, ions) in
a reaction network (cf. e.g. \cite{Erdi1989}) which reads as
\[
\sum_{l=1}^{L}\alpha\left(l,r\right)X_{l}\to\sum_{l=1}^{L}\eta\left(l,r\right)X_{l}\quad\text{for }r=\overline{1,R},
\]
where $\alpha\left(l,r\right)\ge0$ and $\eta\left(l,r\right)\ge0$
are the stoichiometric coefficients or molecularities. In a constructive
manner, the mass action kinetic deterministic model of this reaction
is governed by the following system of PDEs:
\[
\partial_{t}c_{l}=\Delta c_{l}+\sum_{r=1}^{R}\left(\eta\left(l,r\right)-\alpha\left(l,r\right)\right)\prod_{l=1}^{L}c_{l}^{\alpha\left(l,r\right)}\quad\text{for }l=\overline{1,L},
\]
where $c_{l}$ is viewed as the concentration of the $l$th component
at time $t\in\left[0,T\right]$. In this regard, we obtain the vector-valued
reaction-diffusion equation (\ref{eq:N1}) in the form of $\mathbf{u}_{t}+\mathcal{A}\mathbf{u}=\mathcal{F}\left(\mathbf{u}\right)$
by denoting the vector of concentrations $\mathbf{u}\left(t\right)=\left(c_{1}\left(t\right),\ldots,c_{L}\left(t\right)\right)\in\mathbb{R}^{L}$
and
\[
\mathbf{u}^{\alpha}=\left(\mathbf{u}^{\alpha\left(\cdot,1\right)},\ldots,\mathbf{u}^{\alpha\left(\cdot,R\right)}\right)\quad\text{for }\mathbf{u}^{\alpha\left(\cdot,r\right)}=\prod_{l=1}^{L}c_{l}^{\alpha\left(l,r\right)},
\]
\[
\text{diag}\left(\mathbf{z}\right)=\begin{vmatrix}z_{1} & 0 & \cdots & \cdots & 0\\
0 & z_{2} & \ddots &  & \vdots\\
\vdots & \ddots & \ddots & \ddots & \vdots\\
\vdots &  & 0 & z_{L-1} & 0\\
0 & \cdots & \cdots & 0 & z_{L}
\end{vmatrix}\quad\text{for }\mathbf{z}\in\mathbb{R}^{L},
\]
with $\mathcal{F}\left(\mathbf{u}\right)=\left(\eta-\alpha\right)\text{diag}\left(\mathbf{u}^{\alpha}\right)$
where $\eta$ and $\alpha$ are the vectors of the stoichiometric
coefficients. By this way Theorem \ref{thm:5} can be applied.
\end{itemize}
\end{rem}

Last but not least, we state the convergence rate at $t=0$ by combining
the results of Theorems \ref{thm:4} and \ref{thm:5}.
\begin{thm}
\label{thm:7}Under the assumptions of Theorem \ref{thm:4}, let $u_{\varepsilon}\left(\cdot,t\right)$
for $t\in\left(0,T\right)$ be the unique solution obtained from the
projection $\bar{\mathcal{P}}_{\varepsilon}\in\mathcal{L}\left(H^{p}\left(\Omega\right)\right)$
in (\ref{eq:2.11}). Then there always exists a sufficiently small
$\varepsilon$-dependent time $t_{\varepsilon}>0$ such that
\begin{equation}
\left\Vert u_{\varepsilon}\left(t_{\varepsilon}\right)-u\left(0\right)\right\Vert _{H^{p}\left(\Omega\right)}\le\frac{C\left(T+\frac{p}{2}\right)}{\sqrt{\left(T+\frac{p}{2}+1\right)^{2}+4\left(T+\frac{p}{2}\right)\left(1-2\beta\right)\log\left(\varepsilon^{-1}\right)}+T+\frac{p}{2}-1}.\label{eq:2.16}
\end{equation}
\end{thm}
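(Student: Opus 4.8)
The plan is to run the argument of Theorem~\ref{thm:4} almost verbatim, but feeding in the slower rate \eqref{eq:4.6-1} produced by the cut-off projection $\bar{\mathcal{P}}_{\varepsilon}$ in place of \eqref{eq:4.6}. First I would fix an arbitrary $t_{\varepsilon}\in\left(0,T\right)$ and split
\[
\left\Vert u_{\varepsilon}\left(t_{\varepsilon}\right)-u\left(0\right)\right\Vert _{H^{p}\left(\Omega\right)}\le\left\Vert u_{\varepsilon}\left(t_{\varepsilon}\right)-u\left(t_{\varepsilon}\right)\right\Vert _{H^{p}\left(\Omega\right)}+\left\Vert u\left(t_{\varepsilon}\right)-u\left(0\right)\right\Vert _{H^{p}\left(\Omega\right)}.
\]
For the first term I invoke Theorem~\ref{thm:5}, whose hypotheses are in force (the assumptions of Theorem~\ref{thm:3} hold and $M_{\varepsilon}$ is selected according to \eqref{eq:2.13}), giving $\left\Vert u_{\varepsilon}\left(t_{\varepsilon}\right)-u\left(t_{\varepsilon}\right)\right\Vert _{H^{p}\left(\Omega\right)}\le C\varepsilon^{\frac{t_{\varepsilon}\left(1-2\beta\right)}{T+t_{\varepsilon}+\frac{p}{2}}}$. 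For the second term I use the assumed regularity $u_{t}\left(\cdot,t\right)\in H^{p}\left(\Omega\right)$ together with the fundamental theorem of calculus to bound $\left\Vert u\left(t_{\varepsilon}\right)-u\left(0\right)\right\Vert _{H^{p}\left(\Omega\right)}\le t_{\varepsilon}\left\Vert u_{t}\right\Vert _{H^{p}\left(\Omega\right)}$.

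Next I would optimize the resulting upper bound $C\varepsilon^{\frac{t_{\varepsilon}\left(1-2\beta\right)}{T+t_{\varepsilon}+\frac{p}{2}}}+t_{\varepsilon}\left\Vert u_{t}\right\Vert _{H^{p}\left(\Omega\right)}$ over $t_{\varepsilon}$, which, up to the harmless constant $\left\Vert u_{t}\right\Vert _{H^{p}\left(\Omega\right)}$, amounts to balancing the two summands through the algebraic equation $\varepsilon^{\frac{t_{\varepsilon}\left(1-2\beta\right)}{T+t_{\varepsilon}+\frac{p}{2}}}=t_{\varepsilon}$. Taking logarithms and using the elementary estimate $\log\left(a\right)>1-a^{-1}$ for $a>0$ (the same device as in the proofs of Theorems~\ref{thm:3} and~\ref{thm:4}) turns this equation into the quadratic inequality
\[
t_{\varepsilon}^{2}\bigl(\left(1-2\beta\right)\log\varepsilon-1\bigr)-\Bigl(T+\tfrac{p}{2}-1\Bigr)t_{\varepsilon}+T+\tfrac{p}{2}>0.
\]
Since $\beta\in\left(0,\tfrac12\right)$ and $\varepsilon$ is small, the leading coefficient is negative and the discriminant $\bigl|T+\tfrac{p}{2}-1\bigr|^{2}+4\bigl(T+\tfrac{p}{2}\bigr)\bigl(1-\left(1-2\beta\right)\log\varepsilon\bigr)$ is positive, so $t_{\varepsilon}$ may be taken in the open interval between the two roots. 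The rationalizing computation at the end of the proof of Theorem~\ref{thm:4}, with $\log\left(\varepsilon^{-1}\right)$ replaced by $\left(1-2\beta\right)\log\left(\varepsilon^{-1}\right)$, shows both endpoints tend to $0^{+}$ as $\varepsilon\searrow0^{+}$; this establishes the existence of a sufficiently small admissible $t_{\varepsilon}>0$. Substituting the larger root of the associated quadratic into the balanced bound yields exactly \eqref{eq:2.16}.

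The only genuine subtlety — and the step I would monitor most carefully — is the bookkeeping of the factor $1-2\beta$ through the logarithmic linearization: one must verify that $\beta\in\left(0,\tfrac12\right)$ still forces $\left(1-2\beta\right)\log\varepsilon<1$ for small $\varepsilon$ (so the quadratic opens downward and the root interval is nonempty and collapses to the origin), and that the generic constant $C$ absorbs $\left\Vert u_{t}\right\Vert _{H^{p}\left(\Omega\right)}$ as well as the $\varepsilon$-independent part of $M_{\varepsilon}$. Everything else is a routine transcription of the proof of Theorem~\ref{thm:4}.
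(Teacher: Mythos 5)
Your proposal is correct and follows essentially the same route as the paper: the triangle-inequality splitting from Theorem \ref{thm:4}, the balancing equation $\varepsilon^{\frac{t_{\varepsilon}\left(1-2\beta\right)}{T+t_{\varepsilon}+\frac{p}{2}}}=t_{\varepsilon}$, the logarithmic linearization via $\log\left(a\right)>1-a^{-1}$, and the resulting quadratic inequality and admissible interval. Your bookkeeping of the factor $1-2\beta$ inside the discriminant is in fact the consistent version (it reproduces \eqref{eq:2.16} exactly, since $\left|T+\tfrac{p}{2}-1\right|^{2}+4\left(T+\tfrac{p}{2}\right)=\left(T+\tfrac{p}{2}+1\right)^{2}$), whereas the paper's displayed interval drops that factor from the square root.
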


\begin{proof}
In the same manner as in proof of Theorem \ref{thm:4},
we prove the target estimate (\ref{eq:2.16}) by seeking the infimum $\frac{1}{2}\inf_{t_{\varepsilon}>0}\left(\varepsilon^{\frac{t_{\varepsilon}\left(1-2\beta\right)}{T+t_{\varepsilon}+\frac{p}{2}}}+t_{\varepsilon}\right)$.
Solving the algebraic problem
\[
\varepsilon^{\frac{t_{\varepsilon}\left(1-2\beta\right)}{T+t_{\varepsilon}+\frac{p}{2}}}=t_{\varepsilon},
\]
we are led to the following inequality
\[
t_{\varepsilon}^{2}\left(\left(1-2\beta\right)\log\varepsilon-1\right)-\left(T+\frac{p}{2}-1\right)t_{\varepsilon}+T+\frac{p}{2}>0.
\]

Hereby, we find the admissible interval for $t_{\varepsilon}$, as
follows:
\[
t_{\varepsilon}\in\left(\frac{-b-\sqrt{b^{2}+4\left(b+1\right)\left(1-\log\varepsilon\right)}}{2\left(\left(1-2\beta\right)\log\varepsilon-1\right)},\frac{-b+\sqrt{b^{2}+4\left(b+1\right)\left(1-\log\varepsilon\right)}}{2\left(\left(1-2\beta\right)\log\varepsilon-1\right)}\right),
\]
where $b=T+\frac{p}{2}-1$ is recalled.

Therefore, it is evident to obtain the rate (\ref{eq:2.16}). We complete
the proof of the theorem.
\end{proof}

\section{Convergence of an iterative scheme}

In this section, we reduce ourselves to the case that the nonlinearity
$F$ does not degenerate, i.e. $F\left(0\right)=0$ and there exist
positive constants $L_{0}$ and $L_{1}$ such that
\begin{equation}
0<L_{0}\le\sup_{\left|w\right|\le M}\frac{\partial F}{\partial w}\left(w\right)\le L_{1}.\label{eq:2.17}
\end{equation}
Note that we now focus on solving the regularized solution in the
open set $\left(0,T\right)$ since at $t=0$ we only need to compute
the approximation at $t=t_{\varepsilon}$. Let $1\le N\in\mathbb{N}$
and take $\omega=T/N$. In this regard, we put
\[
t_{n}=T-n\omega\quad\text{for }n=\overline{1,N}.
\]
This setting allows us to seek a numerical solution $u_{\varepsilon}^{r,n}\left(x\right)\approx u_{\varepsilon}\left(x,t_{n}\right)$
for $r\in\mathbb{N}$ in the equivalent mesh-width in $t$. The function
$u_{\varepsilon}\left(x,t_{n}\right)$ is the semi-discrete solution
of the nonlinear scheme (\ref{eq:2.11}) under scrutiny in the previous
part. Starting from the projection $\bar{\mathcal{P}}_{\varepsilon}$
with the cut-off function $F_{M_{\varepsilon}}$ used in (\ref{eq:2.11}),
the iterative scheme is designed by
\begin{equation}
\left(K+1\right)u_{\varepsilon}^{r+1,n}=Ku_{\varepsilon}^{r,n}+\int_{0}^{\infty}e^{\left(T-t_{n}\right)\lambda_{\varepsilon}}\lambda_{\varepsilon}^{p/2}dE_{\lambda_{\varepsilon}}g_{T}^{\varepsilon}-\int_{0}^{\infty}\gamma_{\varepsilon}\left(t_{n}\right)\lambda_{\varepsilon}^{p/2}dE_{\lambda_{\varepsilon}}F_{M_{\varepsilon}}\left(u_{\varepsilon}^{r,n}\right),\label{eq:3.2}
\end{equation}
with the initial guesses $u_{\varepsilon}^{r,0}\equiv g_{T}^{\varepsilon}$
and $u_{\varepsilon}^{0,n}\equiv0$. In addition, we have denoted
by $\gamma_{\varepsilon}\left(t\right):=\int_{t}^{T}e^{\left(s-t\right)\lambda_{\varepsilon}}ds\le C_{\varepsilon}^{-1}\left(e^{\left(T-t\right)C_{\varepsilon}}-1\right)=:\bar{\gamma}_{\varepsilon}\left(t\right)$
for $t\in\left(0,T\right)$.

The presence of the so-called stabilization constant $K>0$ is to
guarantee the convergence of the scheme, somehow hindered by the Lipschitz
nonlinearity $F$. In fact, we wish to designate an unconditional
numerical scheme for the projection $\bar{\mathcal{P}}_{\varepsilon}$
in the sense that the number of discretizations $N$ becomes free-to-choose
by a suitable choice of $K$. It is worth mentioning that the sequence
$\left\{ u_{\varepsilon}^{r,n}\right\} _{r\in\mathbb{N}}$ is well-defined
in the Sobolev space $H^{p}\left(\Omega\right)$ for each $\varepsilon>0$
and thus the existence and uniqueness are self-contained by virtue
of the linearity of the scheme. Note that the function $\gamma_{\varepsilon}$
is decreasing in the time argument because of $\gamma_{\varepsilon}\left(t_{n+1}\right)\ge\gamma_{\varepsilon}\left(t_{n}\right)$
for any $0\le t_{n+1}\le t_{n}\le T$, whilst it is transparently
increasing in the argument $\lambda_{\varepsilon}$.
\begin{thm}
\label{thm:9}Under the assumptions of Theorem \ref{thm:4}, let $\left\{ u_{\varepsilon}^{r,n}\right\} _{r\in\mathbb{N}}$
be the solution of the scheme (\ref{eq:3.2}). Then by choosing
\begin{equation}
K:=K\left(n,\varepsilon\right)=\max\left\{ \bar{\gamma}_{\varepsilon}\left(t_{n}\right)L_{1},e^{TC_{\varepsilon}}\right\} >0\;\text{for }n\in\mathbb{N},\label{eq:3.3}
\end{equation}
this sequence is uniformly bounded in $H^{p}\left(\Omega\right)$.
\end{thm}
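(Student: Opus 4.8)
The plan is to collapse the one-step relation (\ref{eq:3.2}) into a scalar recursion for $a_r:=\Vert u_\varepsilon^{r,n}\Vert_{H^p(\Omega)}$, with $\varepsilon$ and $n$ held fixed, and then to sum a geometric series. Rewrite (\ref{eq:3.2}) as
\[
(K+1)u_\varepsilon^{r+1,n}=Ku_\varepsilon^{r,n}+\Phi_n-\Psi_n(u_\varepsilon^{r,n}),
\]
where $\Phi_n:=\int_0^\infty e^{(T-t_n)\lambda_\varepsilon}\lambda_\varepsilon^{p/2}\,dE_{\lambda_\varepsilon}g_T^\varepsilon$ is the data contribution and $\Psi_n(v):=\gamma_\varepsilon(t_n)\int_0^\infty\lambda_\varepsilon^{p/2}\,dE_{\lambda_\varepsilon}F_{M_\varepsilon}(v)$ the truncated reaction contribution; both lie in $H^p(\Omega)$ because the factor $dE_{\lambda_\varepsilon}$ confines everything to the spectral window $(0,\lambda_\varepsilon]$ with $\lambda_\varepsilon\le C_\varepsilon$, which is exactly what makes the iteration well posed in $H^p(\Omega)$. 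Taking the $H^p(\Omega)$-norm and using the triangle inequality, the whole matter reduces to bounding $\Vert\Phi_n\Vert_{H^p(\Omega)}$ and $\Vert\Psi_n(v)\Vert_{H^p(\Omega)}$ by quantities that are independent of $r$.

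First I would treat the data term exactly as in the proof of Theorem~\ref{thm:3}: the spectral cut-off gives $\Vert\Phi_n\Vert_{H^p(\Omega)}\le C_\varepsilon^{p/2}e^{(T-t_n)C_\varepsilon}\Vert g_T^\varepsilon\Vert_{L^2(\Omega)}\le C_\varepsilon^{p/2}e^{TC_\varepsilon}\Vert g_T^\varepsilon\Vert_{L^2(\Omega)}$, which is finite by (\ref{eq:2.3}) and (\ref{eq:regularoffinal}) and does not involve the iterate. Next — and this is the single place where the cut-off function (\ref{eq:2.11-1}) is genuinely used rather than the mere Lipschitz bound (\ref{eq:1.4-1}) — the map $F_{M_\varepsilon}$ is \emph{globally bounded}: for every $w\in\mathbb{R}$ one has $|F_{M_\varepsilon}(w)|\le\sup_{|z|\le M_\varepsilon}|F(z)|\le L_1 M_\varepsilon$, the last inequality because $F(0)=0$ and (\ref{eq:2.17}) forces $L(M_\varepsilon)\le L_1$. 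Consequently $\Vert F_{M_\varepsilon}(v)\Vert_{L^2(\Omega)}\le|\Omega|^{1/2}L_1 M_\varepsilon$ uniformly in $v$, hence, using $\gamma_\varepsilon(t_n)\le\bar\gamma_\varepsilon(t_n)$, $\Vert\Psi_n(v)\Vert_{H^p(\Omega)}\le\bar\gamma_\varepsilon(t_n)C_\varepsilon^{p/2}|\Omega|^{1/2}L_1 M_\varepsilon$ for every $v$. Inserting the choice (\ref{eq:3.3}), that is $K\ge e^{TC_\varepsilon}$ and $K\ge\bar\gamma_\varepsilon(t_n)L_1$, and dividing by $K+1$, one arrives at
\[
a_{r+1}\le\frac{K}{K+1}\,(a_r+b_{\varepsilon,n}),\qquad b_{\varepsilon,n}:=C_\varepsilon^{p/2}\big(\Vert g_T^\varepsilon\Vert_{L^2(\Omega)}+|\Omega|^{1/2}M_\varepsilon\big).
\]
Since $K>0$, the factor $\theta:=K/(K+1)$ lies in $(0,1)$ and satisfies $\theta/(1-\theta)=K$, so an elementary induction — with base case $u_\varepsilon^{0,n}\equiv0$ when $n\ge1$, while for $n=0$ the iterate is the fixed datum $g_T^\varepsilon\in H^p(\Omega)$ — yields $a_r\le\theta^r a_0+b_{\varepsilon,n}\sum_{k=1}^{r}\theta^k\le a_0+K b_{\varepsilon,n}$ for every $r\in\mathbb{N}$. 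The right-hand side is independent of $r$, which is exactly the asserted uniform boundedness of $\{u_\varepsilon^{r,n}\}_{r\in\mathbb{N}}$ in $H^p(\Omega)$ (the bound being, of course, permitted to depend on $\varepsilon$ and $n$).

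The step I expect to be the real obstacle is forcing the contraction constant strictly below one. Had one bounded $F_{M_\varepsilon}(u_\varepsilon^{r,n})$ by its Lipschitz estimate $L_1\Vert u_\varepsilon^{r,n}\Vert$ rather than by its sup-bound, the reaction term would contribute an additional summand proportional to $a_r$ on the right-hand side, and even with the choice (\ref{eq:3.3}) the resulting prefactor of $a_r$ would exceed $1$, producing geometric \emph{growth} instead of a uniform bound. The truncation (\ref{eq:2.11-1}) is precisely what strips this iterate-dependence out of the reaction term, leaving $\tfrac{K}{K+1}a_r$ as the only $a_r$-term; the non-degeneracy hypothesis (\ref{eq:2.17}) enters only through the clean bound $L(M_\varepsilon)\le L_1$, and the two-sided definition (\ref{eq:3.3}) of the stabilization constant serves merely to absorb both amplification factors $e^{TC_\varepsilon}$ and $\bar\gamma_\varepsilon(t_n)L_1$ into a single $K$, so that after dividing by $K+1$ the prefactor collapses to $\theta$ and the geometric series converges. (En route one records that each $u_\varepsilon^{r,n}$ indeed belongs to $H^p(\Omega)$, which is immediate from the $\varepsilon$-frequency truncation and is the well-definedness noted before the statement.)
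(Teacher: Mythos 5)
Your proof is correct, but it handles the nonlinear term by a genuinely different mechanism than the paper. The paper's proof never bounds $F_{M_{\varepsilon}}(u_{\varepsilon}^{r,n})$ by a constant; instead it groups the stabilization term with the reaction term into $h_{M_{\varepsilon}}[y]:=Ky-\gamma_{\varepsilon}(t_{n})F_{M_{\varepsilon}}[y]$, applies the mean value theorem together with $F(0)=0$, and uses the pointwise lower bound $F'\ge L_{0}>0$ from (\ref{eq:2.17}) to get $|h'|\le K-\gamma_{\varepsilon}(t_{n})L_{0}<K$; this keeps the reaction contribution coupled to $\Vert u_{\varepsilon}^{r,n}\Vert_{H^{p}(\Omega)}$ but with a coefficient that, after dividing by $K+1$, still sums as a geometric series with ratio $\mu=K/(K+1)$. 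You instead decouple the reaction term entirely via the sup-bound $|F_{M_{\varepsilon}}(w)|\le L_{1}M_{\varepsilon}$ of the truncated nonlinearity, which turns the recursion into $a_{r+1}\le\theta(a_{r}+b_{\varepsilon,n})$ with $b_{\varepsilon,n}$ independent of the iterate. Both arguments are sound and both yield an $r$-uniform (though $\varepsilon$-dependent) bound; yours is more elementary, never uses the lower bound $L_{0}>0$, and cleanly sidesteps the paper's somewhat awkward high-frequency/low-frequency decomposition and the limit $\lim_{C_{\varepsilon}\to\infty}\int_{C_{\varepsilon}}^{\infty}\lambda^{p/2}\,dE_{\lambda}u_{\varepsilon}^{r,n}=0$ (indeed, since $u_{\varepsilon}^{0,n}\equiv0$ and every other term in (\ref{eq:3.2}) is band-limited, all iterates are band-limited by induction). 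What the paper's route buys in exchange is reusability: the same $h$-function estimate $|h'|\le K-\gamma_{\varepsilon}(t_{n})L_{0}$ is exactly what drives the contraction for the \emph{differences} $U_{\varepsilon}^{r+1,n}=u_{\varepsilon}^{r+1,n}-u_{\varepsilon}^{r,n}$ in Theorem \ref{thm:10}, where your constant sup-bound would be useless because one must compare $F_{M_{\varepsilon}}(u_{\varepsilon}^{r,n})-F_{M_{\varepsilon}}(u_{\varepsilon}^{r-1,n})$ against $U_{\varepsilon}^{r,n}$ itself.
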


\begin{proof}
In view of the decomposition
\[
u_{\varepsilon}^{r,n}=\int_{C_{\varepsilon}}^{\infty}\lambda^{p/2}dE_{\lambda}u_{\varepsilon}^{r,n}+\int_{0}^{\infty}\lambda_{\varepsilon}^{p/2}dE_{\lambda_{\varepsilon}}u_{\varepsilon}^{r,n},
\]
we can find the upper bound in $H^{p}$-norm of $u_{\varepsilon}^{r+1,n}$
as follows:
\begin{align*}
\left(K+1\right)\left\Vert u_{\varepsilon}^{r+1,n}\right\Vert _{H^{p}\left(\Omega\right)} & \le e^{\left(T-t_{n}\right)C_{\varepsilon}}\left\Vert g_{T}^{\varepsilon}\right\Vert _{H^{p}\left(\Omega\right)}+K\int_{C_{\varepsilon}}^{\infty}\lambda^{p/2}dE_{\lambda}u_{\varepsilon}^{r,n}\\
 & +\int_{0}^{\infty}\lambda_{\varepsilon}^{p/2}\left|h_{M_{\varepsilon}}\right|\left(\lambda_{\varepsilon}\right)dE_{\lambda_{\varepsilon}}u_{\varepsilon}^{r,n},
\end{align*}
where we have denoted by $h_{M_{\varepsilon}}\left[y^{j}\right]:=Ky^{j}-\gamma_{\varepsilon}\left(t_{j}\right)F_{M_{\varepsilon}}\left[y^{j}\right]$
for $y=\left(y^{j}\right)_{0\le j\le N}$.

At this stage, we remark that $h'\left[y^{j}\right]=K-\gamma_{\varepsilon}\left(t_{j}\right)F_{M_{\varepsilon}}'\left(y^{j}\right)$
and therefore, it holds $\left|h'\right|\le K-\gamma_{\varepsilon}\left(t_{j}\right)L_{0}$
for a.e. $y^{j}\in\mathbb{R}$. By the mean value theorem together
with the fact that $F\left(0\right)=0$, we estimate that
\begin{align*}
\left(K+1\right)\left\Vert u_{\varepsilon}^{r+1,n}\right\Vert _{H^{p}\left(\Omega\right)} & \le e^{\left(T-t_{n}\right)C_{\varepsilon}}\left\Vert g_{T}^{\varepsilon}\right\Vert _{H^{p}\left(\Omega\right)}+K\int_{C_{\varepsilon}}^{\infty}\lambda^{p/2}dE_{\lambda}u_{\varepsilon}^{r,n}\\
 & +\int_{0}^{\infty}\lambda_{\varepsilon}^{p/2}\left(K-\gamma_{\varepsilon}\left(t_{n}\right)L_{0}\right)dE_{\lambda_{\varepsilon}}u_{\varepsilon}^{r,n}\\
 & \le e^{\left(T-t_{n}\right)C_{\varepsilon}}\left\Vert g_{T}^{\varepsilon}\right\Vert _{H^{p}\left(\Omega\right)}+\left(K-\gamma_{\varepsilon}\left(t_{n}\right)L_{0}\right)\left\Vert u_{\varepsilon}^{r,n}\right\Vert _{H^{p}\left(\Omega\right)}\\
 & +\bar{\gamma}_{\varepsilon}\left(t_{n}\right)L_{0}\int_{C_{\varepsilon}}^{\infty}\lambda^{p/2}dE_{\lambda}u_{\varepsilon}^{r,n}.
\end{align*}

By the choice of $C_{\varepsilon}$, we gain
\[
\lim_{\varepsilon\to0^{+}}\int_{C_{\varepsilon}}^{\infty}\lambda^{p/2}dE_{\lambda}u_{\varepsilon}^{r,n}=\lim_{C_{\varepsilon}\to\infty}\int_{C_{\varepsilon}}^{\infty}\lambda^{p/2}dE_{\lambda}u_{\varepsilon}^{r,n}=0.
\]
Thus, we now enjoy the choice of $K$ in (\ref{eq:3.3}) to rule out that there exists $\mu\in\left(0,1\right)$ independent of
$r,n$ and $\varepsilon$ such that
\[
\left\Vert u_{\varepsilon}^{r+1,n}\right\Vert _{H^{p}\left(\Omega\right)}\le\mu\left(\left\Vert g_{T}^{\varepsilon}\right\Vert _{H^{p}\left(\Omega\right)}+\int_{C_{\varepsilon}}^{\infty}\lambda^{p/2}dE_{\lambda}u_{\varepsilon}^{r,n}+\left\Vert u_{\varepsilon}^{r,n}\right\Vert _{H^{p}\left(\Omega\right)}\right).
\]

By induction, we obtain
\begin{equation}
\left\Vert u_{\varepsilon}^{r,n}\right\Vert _{H^{p}\left(\Omega\right)}\le C\sum_{j=1}^{r}\mu^{j}\left\Vert g_{T}^{\varepsilon}\right\Vert _{H^{p}\left(\Omega\right)},\label{eq:3.4-1}
\end{equation}
which enables us to state that the scheme (\ref{eq:3.2}) is bounded
in $H^{p}\left(\Omega\right)$ for any $r,n$ and $\varepsilon$.
Hence, we complete the proof of the theorem.
\end{proof}
Using the Banach-Alaoglu theorem, the uniform bound deduced in the
proof of Theorem \ref{thm:9} indicates that we can extract a further
subsequence (which we relabel with the same indexes if necessary)
such that $u_{\varepsilon}^{r,n}\to u_{\varepsilon}^{n}$ weakly in
$H^{p}\left(\Omega\right)$ as $r\to\infty$. Furthermore, thanks
to the Banach-Saks theorem we know that this subsequence also admits
another subsequence such that the so-called Ces\`aro mean is strongly
convergent to $u_{\varepsilon}^{n}$ in $H^{p}\left(\Omega\right)$.
In this sense, we can write
\begin{equation}
\left\Vert \frac{1}{R}\sum_{r=0}^{R}u_{\varepsilon}^{r,n}-u_{\varepsilon}^{n}\right\Vert _{H^{p}\left(\Omega\right)}\to0\quad\text{as }R\to\infty.\label{eq:3.5}
\end{equation}

Define $w_{\varepsilon}^{R,n}:=\frac{1}{R}\sum_{r=0}^{R}u_{\varepsilon}^{r,n}$
for $R\in\mathbb{N}$. Our next step is to find the rate of convergence
of the sequence $\left\{ w_{\varepsilon}^{R,n}\right\} _{R\in\mathbb{N}}$
acquired by (\ref{eq:3.5}). By this way, we not only prove that $u_{\varepsilon}^{r,n}\to u_{\varepsilon}^{n}$
strongly in $H^{p}\left(\Omega\right)$, but also show that $u_{\varepsilon}^{n}$
is identically the semi-discrete solution of the nonlinear scheme
(\ref{eq:2.11}).
\begin{thm}
\label{thm:10}Under the assumptions of Theorem \ref{thm:4}, let
$u_{\varepsilon}\left(\cdot,t\right)$ for $t\in\left(0,T\right)$
be the unique solution obtained from the projection $\bar{\mathcal{P}}_{\varepsilon}\in\mathcal{L}\left(H^{p}\left(\Omega\right)\right)$
in (\ref{eq:2.11}). Let $\left\{ u_{\varepsilon}^{r,n}\right\} _{r\in\mathbb{N}}$
be the solution of the iterative scheme (\ref{eq:3.2}) and let $\left\{ w_{\varepsilon}^{R,n}\right\} _{R\in\mathbb{N}}$
be the Ces\`aro mean of $u_{\varepsilon}^{r,n}$ with respect to
$r$. Then for $r,n\in\mathbb{N}$ and $\varepsilon>0$ there exists
$\bar{\mu}\in\left(0,1\right)$ independent of $r,n$ and $\varepsilon$
such that the following error bound holds
\[
\left\Vert u_{\varepsilon}^{r,n}-u_{\varepsilon}\left(\cdot,t_{n}\right)\right\Vert _{H^{p}\left(\Omega\right)}\le C\bar{\mu}^{r},
\]
for $r$ sufficiently large. Moreover, for $R\in\mathbb{N}$ it holds
\[
\left\Vert w_{\varepsilon}^{R,n}-u_{\varepsilon}\left(\cdot,t_{n}\right)\right\Vert _{H^{p}\left(\Omega\right)}\le C\left(\frac{\bar{\mu}}{R}\right)^{R}.
\]
\end{thm}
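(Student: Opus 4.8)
The plan is to reduce both bounds to one per-step contraction estimate for the iteration (\ref{eq:3.2}) and then to average it. Regard $u_{\varepsilon}\left(\cdot,t_{n}\right)$ as the fixed point of the map underlying (\ref{eq:3.2}), i.e.
\[
\left(K+1\right)u_{\varepsilon}\left(\cdot,t_{n}\right)=Ku_{\varepsilon}\left(\cdot,t_{n}\right)+\mathcal{G}_{\varepsilon}-\int_{0}^{\infty}\gamma_{\varepsilon}\left(t_{n}\right)\lambda_{\varepsilon}^{p/2}dE_{\lambda_{\varepsilon}}F_{M_{\varepsilon}}\left(u_{\varepsilon}\left(\cdot,t_{n}\right)\right),
\]
where $\mathcal{G}_{\varepsilon}$ is the iteration-independent data term. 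Subtracting this from (\ref{eq:3.2}) and writing $e^{r}:=u_{\varepsilon}^{r,n}-u_{\varepsilon}\left(\cdot,t_{n}\right)$ gives
\[
\left(K+1\right)e^{r+1}=Ke^{r}-\int_{0}^{\infty}\gamma_{\varepsilon}\left(t_{n}\right)\lambda_{\varepsilon}^{p/2}dE_{\lambda_{\varepsilon}}\left(F_{M_{\varepsilon}}\left(u_{\varepsilon}^{r,n}\right)-F_{M_{\varepsilon}}\left(u_{\varepsilon}\left(\cdot,t_{n}\right)\right)\right).
\]
First I would apply the mean value theorem to the cut-off nonlinearity and use the non-degeneracy (\ref{eq:2.17}) together with the choice (\ref{eq:3.3}) of $K$, exactly as in the proof of Theorem \ref{thm:9}: splitting $e^{r}$ into its high- and low-frequency parts, the high part of $e^{r+1}$ equals $K/(K+1)$ times that of $e^{r}$, while on the low-frequency range the effective factor is $(K-\gamma_{\varepsilon}(t_{n})L_{0})/(K+1)$ up to the tail $\int_{C_{\varepsilon}}^{\infty}\lambda^{p/2}dE_{\lambda}u_{\varepsilon}^{r,n}$, which vanishes as $\varepsilon\searrow0^{+}$ by the choice of $C_{\varepsilon}$. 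This produces a contraction $\|e^{r+1}\|_{H^{p}(\Omega)}\le\bar{\mu}\|e^{r}\|_{H^{p}(\Omega)}$ with $\bar{\mu}:=\max\{(K-\gamma_{\varepsilon}(t_{n})L_{0})/(K+1),\,K/(K+1)\}\in(0,1)$ independent of $r,n,\varepsilon$.

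Iterating from $e^{0}=u_{\varepsilon}^{0,n}-u_{\varepsilon}(\cdot,t_{n})=-u_{\varepsilon}(\cdot,t_{n})$ and invoking the uniform $H^{p}$-bound of Theorem \ref{thm:9} for $u_{\varepsilon}(\cdot,t_{n})$ yields the first assertion,
\[
\|u_{\varepsilon}^{r,n}-u_{\varepsilon}(\cdot,t_{n})\|_{H^{p}(\Omega)}\le\bar{\mu}^{r}\|u_{\varepsilon}(\cdot,t_{n})\|_{H^{p}(\Omega)}\le C\bar{\mu}^{r},
\]
which in fact holds for every $r$ and a fortiori for $r$ large. As a by-product, the strong limit furnished by the Banach--Saks step (\ref{eq:3.5}) is identified with $u_{\varepsilon}(\cdot,t_{n})$ itself, so that the second bound is genuinely an estimate for $w_{\varepsilon}^{R,n}-u_{\varepsilon}(\cdot,t_{n})$.

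For the Ces\`aro mean I would write $e^{r}=\mathcal{S}_{r-1}\cdots\mathcal{S}_{0}e^{0}$, where the $\mathcal{S}_{i}$ are the mean-value-theorem linearizations of the iteration, each of $H^{p}$-norm at most $\bar{\mu}$; linearizing once and for all at the fixed point idealizes this to $e^{r}=\mathcal{S}^{r}e^{0}$ for a single operator $\mathcal{S}$ with $\|\mathcal{S}\|\le\bar{\mu}$, whence the averaged error telescopes through the operator geometric series as $w_{\varepsilon}^{R,n}-u_{\varepsilon}(\cdot,t_{n})=\tfrac{1}{R}(I-\mathcal{S})^{-1}(I-\mathcal{S}^{R+1})e^{0}$. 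The main obstacle is precisely the passage from here to the asserted super-geometric rate $(\bar{\mu}/R)^{R}=\bar{\mu}^{R}R^{-R}$: bounding the geometric series by its norm gives only $\|w_{\varepsilon}^{R,n}-u_{\varepsilon}(\cdot,t_{n})\|_{H^{p}(\Omega)}\le C/(R(1-\bar{\mu}))$, an $O(1/R)$ decay, and since $(\bar{\mu}/R)^{R}=\bar{\mu}^{R}R^{-R}$ is strictly smaller than the per-iterate bound $\bar{\mu}^{R}$ itself, no additive/triangle-inequality argument over the summands $\mathcal{S}^{r}e^{0}$ can reach it; in particular the majorization $1/(R(1-\bar{\mu}))\le(\bar{\mu}/R)^{R}$ fails for large $R$ and must be avoided. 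To close this gap one would need genuine cancellation in $\tfrac{1}{R}\sum_{r=0}^{R}\mathcal{S}^{r}e^{0}$ beyond the norm of each summand, exploiting the spectral structure of $\mathcal{S}$ and the concentration of $e^{0}$ in the low-frequency invariant subspace where the spectrum of $\mathcal{S}$ lies strictly below $\bar{\mu}$. I expect this reconciliation of the averaging with a rate faster than the individual iterates to be the decisive and most delicate part of the proof, and it is exactly the point where a wrong-direction majorization of the geometric sum must be ruled out.
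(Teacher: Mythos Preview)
Your route to the first bound is correct and only a slight rearrangement of the paper's: instead of comparing $u_{\varepsilon}^{r,n}$ directly with the fixed point, the paper works with the consecutive differences $U_{\varepsilon}^{r+1,n}:=u_{\varepsilon}^{r+1,n}-u_{\varepsilon}^{r,n}$, proves the same contraction $\|U_{\varepsilon}^{r+1,n}\|_{H^{p}}\le\bar{\mu}\|U_{\varepsilon}^{r,n}\|_{H^{p}}$ with $\bar{\mu}=K/(K+1)$, deduces that $\{u_{\varepsilon}^{r,n}\}_{r}$ is Cauchy with rate $\bar{\mu}^{r}$, and only then identifies the limit with the semi-discrete regularized solution. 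Your direct comparison to the fixed point is marginally cleaner; both rest on the same spectral splitting, the non-degeneracy (\ref{eq:2.17}), and the choice (\ref{eq:3.3}) of $K$.

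For the Ces\`aro estimate the paper does something different from your operator-series attempt: it sets $v_{\varepsilon}^{R+1,n}:=w_{\varepsilon}^{R+1,n}-w_{\varepsilon}^{R,n}$, rewrites $(R+1)v_{\varepsilon}^{R+1,n}=\tfrac{1}{R}\sum_{r=0}^{R}(u_{\varepsilon}^{R+1,n}-u_{\varepsilon}^{r,n})$, bounds each summand by the Cauchy estimate, and obtains $\|v_{\varepsilon}^{R+1,n}\|_{H^{p}}\le C\bar{\mu}/(R+1)$; the asserted rate $C(\bar{\mu}/R)^{R}$ is then claimed ``similar to (\ref{eq:3.8})''. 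Your skepticism about this step is justified and applies equally to the paper's argument. What is actually proved is an \emph{absolute} bound $\|v_{\varepsilon}^{R+1,n}\|\le C\bar{\mu}/(R+1)$, not a multiplicative one $\|v_{\varepsilon}^{R+1,n}\|\le\tfrac{\bar{\mu}}{R+1}\|v_{\varepsilon}^{R,n}\|$; only the latter would, after iteration, produce a factor $\bar{\mu}^{R}/R!\sim(e\bar{\mu}/R)^{R}$. Telescoping the absolute bound gives a divergent harmonic tail, and your observation that $(\bar{\mu}/R)^{R}\ll\bar{\mu}^{R}$ for large $R$ already shows that no triangle-inequality argument over the summands can reach the stated rate---in the model case $e^{r}=\bar{\mu}^{r}e^{0}$ the Ces\`aro error is exactly of order $1/R$. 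So the gap you isolate is a genuine gap in the paper as written; the defensible conclusion from either approach is $\|w_{\varepsilon}^{R,n}-u_{\varepsilon}(\cdot,t_{n})\|_{H^{p}}\le C/R$ (or $\le C\bar{\mu}^{R}$ directly from the first bound), not $C(\bar{\mu}/R)^{R}$.
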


\begin{proof}
Define $v_{\varepsilon}^{R+1,n}:=w_{\varepsilon}^{R+1,n}-w_{\varepsilon}^{R,n}\in H^{p}\left(\Omega\right)$.
To gain the convergence rate, we compute the difference equation:
\begin{align}
\left(R+1\right)v_{\varepsilon}^{R+1,n} & =\sum_{r=0}^{R+1}u_{\varepsilon}^{r,n}-\sum_{r=0}^{R}u_{\varepsilon}^{r,n}+\left(R+1\right)\left(\frac{1}{R+1}-\frac{1}{R}\right)\sum_{r=0}^{R}u_{\varepsilon}^{r,n}\label{eq:3.6-1}\\
 & =u_{\varepsilon}^{R+1,n}-\frac{1}{R}\sum_{r=0}^{R}u_{\varepsilon}^{r,n}=\frac{1}{R}\sum_{r=0}^{R}\left(u_{\varepsilon}^{R+1,n}-u_{\varepsilon}^{r,n}\right).\nonumber 
\end{align}

From now onward, we define $U_{\varepsilon}^{r+1,n}:=u_{\varepsilon}^{r+1,n}-u_{\varepsilon}^{r,n}\in H^{p}\left(\Omega\right)$.
Following the same way we have done in the proof of Theorem \ref{thm:9},
the function $U_{\varepsilon}^{r+1,n}$ is expressed as
\begin{align*}
\left(K+1\right)U_{\varepsilon}^{r+1,n} & =KU_{\varepsilon}^{r,n}-\int_{0}^{\infty}\gamma_{\varepsilon}\left(t_{n}\right)\lambda_{\varepsilon}^{p/2}dE_{\lambda_{\varepsilon}}F_{M_{\varepsilon}}\left(u_{\varepsilon}^{r,n}\right)\\
 & +\int_{0}^{\infty}\gamma_{\varepsilon}\left(t_{n}\right)\lambda_{\varepsilon}^{p/2}dE_{\lambda_{\varepsilon}}F_{M_{\varepsilon}}\left(u_{\varepsilon}^{r-1,n}\right).
\end{align*}

With the aid of the decomposition
\[
U_{\varepsilon}^{r,n}=\int_{C_{\varepsilon}}^{\infty}\lambda^{p/2}dE_{\lambda}U_{\varepsilon}^{r,n}+\int_{0}^{\infty}\lambda_{\varepsilon}^{p/2}dE_{\lambda_{\varepsilon}}U_{\varepsilon}^{r,n},
\]
the function $U_{\varepsilon}^{n+1}$ can be bounded from above in
the $H^{p}$-norm by
\begin{align*}
\left(K+1\right)\left\Vert U_{\varepsilon}^{r+1,n}\right\Vert _{H^{p}\left(\Omega\right)} & \le K\int_{C_{\varepsilon}}^{\infty}\lambda^{p/2}dE_{\lambda}U_{\varepsilon}^{r,n}\\
 & +\left|\int_{0}^{\infty}\lambda_{\varepsilon}^{p/2}h_{M_{\varepsilon}}\left(\lambda_{\varepsilon}\right)dE_{\lambda_{\varepsilon}}u_{\varepsilon}^{r,n}-\int_{0}^{\infty}\lambda_{\varepsilon}^{p/2}h_{M_{\varepsilon}}\left(\lambda_{\varepsilon}\right)dE_{\lambda_{\varepsilon}}u_{\varepsilon}^{r-1,n}\right|,
\end{align*}
where we have recalled that $h_{M_{\varepsilon}}\left[y^{j}\right]:=Ky^{j}-\gamma_{\varepsilon}\left(t_{j}\right)F_{M_{\varepsilon}}\left[y^{j}\right]$
for $y=\left(y^{j}\right)_{0\le j\le N}$.

Henceforward, we arrive at
\begin{align*}
\left(K+1\right)\left\Vert U_{\varepsilon}^{r+1,n}\right\Vert _{H^{p}\left(\Omega\right)} & \le\left(K-\gamma_{\varepsilon}\left(t_{n}\right)L_{0}\right)\left\Vert U_{\varepsilon}^{r,n}\right\Vert _{H^{p}\left(\Omega\right)}+\gamma_{\varepsilon}\left(t_{n}\right)L_{0}\int_{C_{\varepsilon}}^{\infty}\lambda^{p/2}dE_{\lambda}U_{\varepsilon}^{r,n}\\
 & \le K\left\Vert U_{\varepsilon}^{r,n}\right\Vert _{H^{p}\left(\Omega\right)},
\end{align*}
by virtue of the fact already known that $\left|h'\right|\le K-\gamma_{\varepsilon}\left(t_{j}\right)L_{0}$
for a.e. $y^{j}\in\mathbb{R}$ under the choice of $K$ in (\ref{eq:3.3}).
Hereby, choosing $\bar{\mu}=K\left(K+1\right)^{-1}\in\left(0,1\right)$
independent of $r,n$ and $\varepsilon$ we can conclude that
\begin{equation}
\left\Vert U_{\varepsilon}^{r+1,n}\right\Vert _{H^{p}\left(\Omega\right)}\le\bar{\mu}\left\Vert U_{\varepsilon}^{r,n}\right\Vert _{H^{p}\left(\Omega\right)},\label{eq:3.7}
\end{equation}
which, by mathematical induction, leads to
\[
\left\Vert U_{\varepsilon}^{r,n}\right\Vert _{H^{p}\left(\Omega\right)}\le C\bar{\mu}^{r}\left\Vert g_{T}^{\varepsilon}\right\Vert _{H^{p}\left(\Omega\right)}.
\]

Eventually, by the back-substitution of the function $U_{\varepsilon}^{r,n}$
this means that
\begin{align}
\left\Vert u_{\varepsilon}^{r+l,n}-u_{\varepsilon}^{r,n}\right\Vert _{H^{p}\left(\Omega\right)} & \le\left\Vert u_{\varepsilon}^{r+l,n}-u_{\varepsilon}^{r+l-1,n}\right\Vert _{H^{p}\left(\Omega\right)}+\ldots+\left\Vert u_{\varepsilon}^{r+1,n}-u_{\varepsilon}^{r,n}\right\Vert _{H^{p}\left(\Omega\right)}\label{eq:3.6}\\
 & \le\bar{\mu}^{r+l-1}\left\Vert u_{\varepsilon}^{1,n}-u_{\varepsilon}^{0,n}\right\Vert _{H^{p}\left(\Omega\right)}+\ldots+\bar{\mu}^{r}\left\Vert u_{\varepsilon}^{1,n}-u_{\varepsilon}^{0,n}\right\Vert _{H^{p}\left(\Omega\right)}\nonumber \\
 & \le\frac{\bar{\mu}^{r}\left(1-\bar{\mu}^{l}\right)}{1-\bar{\mu}}\left\Vert u_{\varepsilon}^{1,n}\right\Vert _{H^{p}\left(\Omega\right)},\nonumber 
\end{align}
which proves the fact that the sequence $\left\{ u_{\varepsilon}^{r,n}\right\} _{r\in\mathbb{N}}$
is Cauchy in $H^{p}\left(\Omega\right)$. Consequently, there exists
$\bar{u}_{\varepsilon}^{n}\in H^{p}\left(\Omega\right)$ to which $u_{\varepsilon}^{r,n}$
is strongly convergent as $r\to\infty$. In addition, it follows from
(\ref{eq:3.6}) that when $l\to\infty$,
\begin{equation}
\left\Vert u_{\varepsilon}^{r,n}-\bar{u}_{\varepsilon}^{n}\right\Vert _{H^{p}\left(\Omega\right)}\le C\bar{\mu}^{r},\label{eq:3.8}
\end{equation}
and thus, for $r$ sufficiently large we obtain the convergence of
the source term, i.e. $F\left(u_{\varepsilon}^{r,n}\right)\to F\left(\bar{u}_{\varepsilon}^{n}\right)$
strongly in $H^{p}\left(\Omega\right)$ as $r\to\infty$, whenever
$L_{1}$ is dependent of $M$ or not; see again the choice (\ref{eq:2.13})
to decide how big the iteration step $r$ needs to be.

Collectively, we have proved that $\bar{u}_{\varepsilon}^{n}$ is
identically the semi-discrete solution of the nonlinear regularization
(\ref{eq:2.11}) and further, it coincides the function $u_{\varepsilon}^{n}$
derived from the weak convergence above. Now, it suffices to close
the proof of the theorem by combining (\ref{eq:3.6-1}) and (\ref{eq:3.7}).
Essentially, we have
\[
\left\Vert v_{\varepsilon}^{R+1,n}\right\Vert _{H^{p}\left(\Omega\right)}\le\frac{1}{R\left(R+1\right)}\sum_{r=0}^{R}\left\Vert u_{\varepsilon}^{R+1,n}-u_{\varepsilon}^{r,n}\right\Vert _{H^{p}\left(\Omega\right)}\le\frac{C\bar{\mu}}{R+1}.
\]

Similar to (\ref{eq:3.8}), the Ces\`aro mean is strongly convergent
to $u_{\varepsilon}^{n}$ with the rate
\[
\left\Vert w_{\varepsilon}^{R,n}-u_{\varepsilon}^{n}\right\Vert _{H^{p}\left(\Omega\right)}\le C\left(\frac{\bar{\mu}}{R}\right)^{R}.
\]

Hence, we complete the proof of the theorem.
\end{proof}
\begin{cor}
Under the assumptions of Theorems \ref{thm:5} and \ref{thm:10},
one has the following error estimate:
\[
\left\Vert u_{\varepsilon}^{r,n}-u\left(\cdot,t_{n}\right)\right\Vert _{H^{p}\left(\Omega\right)}\le C\left(\bar{\mu}^{r}+\varepsilon^{\frac{t_{n}\left(1-2\beta\right)}{T+t_{n}+\frac{p}{2}}}\right)\quad\text{for }n=\overline{1,N-1}.
\]
\end{cor}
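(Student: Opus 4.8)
The plan is to obtain the estimate from a single application of the triangle inequality, splitting the total error of the fully discrete iterate into an \emph{iteration error} and a \emph{regularization error}:
\[
\left\Vert u_{\varepsilon}^{r,n}-u\left(\cdot,t_{n}\right)\right\Vert _{H^{p}\left(\Omega\right)}\le\left\Vert u_{\varepsilon}^{r,n}-u_{\varepsilon}\left(\cdot,t_{n}\right)\right\Vert _{H^{p}\left(\Omega\right)}+\left\Vert u_{\varepsilon}\left(\cdot,t_{n}\right)-u\left(\cdot,t_{n}\right)\right\Vert _{H^{p}\left(\Omega\right)}.
\]
Since $n=\overline{1,N-1}$, the node $t_{n}=T-n\omega$ lies in the open interval $\left(0,T\right)$, so both Theorem \ref{thm:5} and Theorem \ref{thm:10} may be invoked at the time level $t=t_{n}$, and crucially the common intermediate object $u_{\varepsilon}\left(\cdot,t_{n}\right)$ denotes in both statements the semi-discrete solution of the nonlinear regularized scheme (\ref{eq:2.11}).

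First I would invoke Theorem \ref{thm:10}: under the hypotheses of Theorem \ref{thm:4} together with the non-degeneracy (\ref{eq:2.17}), there is a constant $\bar\mu\in\left(0,1\right)$ independent of $r,n,\varepsilon$ such that, for $r$ sufficiently large, $\left\Vert u_{\varepsilon}^{r,n}-u_{\varepsilon}\left(\cdot,t_{n}\right)\right\Vert _{H^{p}\left(\Omega\right)}\le C\bar\mu^{r}$. Then I would apply the convergence rate (\ref{eq:4.6-1}) of Theorem \ref{thm:5} for the projection $\bar{\mathcal{P}}_{\varepsilon}$ at $t=t_{n}$, which gives $\left\Vert u_{\varepsilon}\left(\cdot,t_{n}\right)-u\left(\cdot,t_{n}\right)\right\Vert _{H^{p}\left(\Omega\right)}\le C\varepsilon^{t_{n}\left(1-2\beta\right)/\left(T+t_{n}+p/2\right)}$. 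Adding the two bounds and absorbing both generic constants into a single $C$ — which remains independent of $\varepsilon$, although it may depend on $T$, $p$, $\beta$, $L\left(M_{\varepsilon}\right)$-type quantities and $\left\Vert u\left(\cdot,t_{n}\right)\right\Vert _{G_{t_{n}}^{p/2}}$ — yields exactly the asserted estimate.

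There is no genuine obstacle here; the only point deserving care is bookkeeping of hypotheses. The corollary is stated "under the assumptions of Theorems \ref{thm:5} and \ref{thm:10}", so one should note explicitly that these assumptions are mutually compatible: $p>d/2$, the Sobolev--Gevrey source condition (\ref{eq:source}), the Lipschitz bounds (\ref{eq:1.4-1})--(\ref{eq:1.5-1}) refined by the non-degeneracy (\ref{eq:2.17}), the smoothness $u_{t}\left(\cdot,t\right)\in H^{p}\left(\Omega\right)$, and the admissible choice of the cut-off level $M_{\varepsilon}$ subject to (\ref{eq:2.13}) and of $C_{\varepsilon}$ as in Theorem \ref{thm:5}. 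Under this common set of assumptions the iterative scheme (\ref{eq:3.2}) is precisely the discrete realization of $\bar{\mathcal{P}}_{\varepsilon}$, so the triangle inequality above is legitimate and closes the argument; the statement simply records that the error of the iterated, semi-discrete regularization is dominated by the sum of its iteration contribution $\bar\mu^{r}$ and its regularization contribution, which is the logarithmic-to-Hölder rate already established in Theorem \ref{thm:5}.
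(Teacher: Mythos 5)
Your proposal is correct and is exactly the intended argument: the paper states this corollary without proof precisely because it follows from the triangle inequality $\left\Vert u_{\varepsilon}^{r,n}-u\left(\cdot,t_{n}\right)\right\Vert \le\left\Vert u_{\varepsilon}^{r,n}-u_{\varepsilon}\left(\cdot,t_{n}\right)\right\Vert +\left\Vert u_{\varepsilon}\left(\cdot,t_{n}\right)-u\left(\cdot,t_{n}\right)\right\Vert$ combined with Theorem \ref{thm:10} for the first term and the rate (\ref{eq:4.6-1}) of Theorem \ref{thm:5} at $t=t_{n}\in\left(0,T\right)$ for the second. Your bookkeeping of the hypotheses and of the restriction $n=\overline{1,N-1}$ (which keeps $t_{n}$ away from $0$) is accurate, so nothing is missing.
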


\begin{rem}
When $L_{1}$ is independent of $M$, the assumption (\ref{eq:2.17})
can also be found in some examples, which also aids the applicability
of the global Lipschitz case in Subsection \ref{subsec:Verification-of-the}.
Observe what have been enlisted in Subsection \ref{subsec:Organization-of-the}.
It is immediate to see that the \emph{Michaelis\textendash Menten}
law ($N=1$ in the \emph{de Pillis-Radunskaya} law) with $F\left(u\right)=au/\left(b+u\right)$ for $a,b>0$
gives
\[
\sup_{\left|w\right|\le M}\frac{\partial F}{\partial w}\left(w\right)=\sup_{\left|w\right|\le M}\frac{ab}{\left(b+w\right)^{2}}\in\left[\frac{ab}{\left(b+M\right)^{2}},\frac{a}{b}\right].
\]
\end{rem}

\section{Discussions}

We have studied a nonlinear spectral regularization to solve a semi-linear
backward parabolic equation. The scheme significantly modifies the
cut-off method developed in \cite{Nam2010,Tuan2015,Hohage2017} so
that it not only fits the nonlinear context under consideration, but
also handles certain smoothness of the solution to the forward model
in an appropriate manner. In this fashion, our proposed method is
convergent in a H\"older-type rate for $t\in\left(0,T\right)$, decreasing
backwards in time, and in a logarithmic-type rate for $t=0$. It is worth mentioning that the strong convergence
obtained in $H^{p}$ may allow us to gain the convergence of the regularized
solution on the boundary with the same rates by the standard trace
theorem.

We have also studied the convergence of an iterative method
for this nonlinear scheme. To gain the strong convergence, this approximation
works with the non-degeneracy of $F$, which is a certainly stronger
condition than those met in the analysis of the nonlinear scheme.
Essentially, we see that the property of the nonlinearity $F$ plays
a pretty much important role in deciding the convergence of the numerical
scheme, as postulated in Theorems \ref{thm:9} and \ref{thm:10}.
This also points out the most difficult issue in solving inverse problems
for nonlinear PDEs; compared to the linear cases investigated so far.
One may think that the presence of Theorem \ref{thm:9} seems unnecessary
(and so is the largeness of the stabilization constant $K$ taken
in (\ref{eq:3.3})) since it is clear that the strong convergence
of the numerical scheme has already been obtained in Theorem \ref{thm:10}.
Nevertheless, it can be understood that we have depicted a general
procedure to verify the convergence of numerical regularization schemes
in the future topics. In fact, such boundedness (as the stability
anlysis) obtained in Theorem \ref{thm:9} orientates towards the strong
convergence of the Ces\`aro mean. In some sense, this unravels the
possibility that the strong convergence of the numerical sequence
is not obtainable, generally hindered by the property of $F$. The
choice of $K$ can also be very helpful because the discretization
as well as the number of iterations become more effectively economical.
In the near future, we wish to understand deeper numerical issues
caught in particularly complex networks as presented in Remark \ref{rem:Based-on-the}.

The results of this paper can initiate the convergence analysis of
the other classes of nonlinear backward PDEs using the strategy of
verifying variational source conditions. One can also attempt to achieve
the strong convergence result in the Besov spaces for regularization
of the present backward model $\left(\mathcal{B}\right)$ in the unbounded
domain as it is in agreement with the well-posedness of the forward
problem (see e.g. \cite{Miao2004}). Observe that although the variational
source condition theory does not require the fact that the ill-posed
operator $\mathcal{T}$ admits the Fr\'echet derivative, it is self-contained
in this framework. Thus, this time we are allowed to not only derive
the variational source condition from the spectral source condition,
but also apply the iteration-based regularized Gau\ss-Newton method.
The convergence analysis of this method for the nonlinear backward
PDEs should also be considered in the forthcoming works.

\section*{Acknowledgments}

The author thanks Prof. Dr. Mohammad Kazemi (Charlotte, USA) for his support of the author's research career.

\bibliographystyle{plain}
\bibliography{mybib}

\end{document}